\documentclass[12pt,a4paper,reqno]{amsart}

\usepackage{a4wide}
\makeatletter
\@namedef{subjclassname@2020}{%
	\textup{2020} Mathematics Subject Classification}
\makeatother

\usepackage{blindtext}
\usepackage[T1]{fontenc}
\usepackage[utf8]{inputenc}
\usepackage{amsmath,amsthm,amsfonts,amssymb,amscd}
\usepackage{dsfont}
\usepackage[shortlabels]{enumitem}
\usepackage{bigints}
\usepackage{pst-node}
\usepackage{tikz-cd}
\usepackage[final]{pdfpages}
 \usepackage[hidelinks]{hyperref}

\numberwithin{equation}{section}

\newtheorem{thm}{Theorem}[section]
\newtheorem{thmx}{Theorem}

\newtheorem{prop}[thm]{Proposition}
\newtheorem{cor}[thm]{Corollary}
\newtheorem{lemma}[thm]{Lemma}
\newtheorem{defn}[thm]{Definition}

\theoremstyle{remark}
\newtheorem{example}{Example}[section]

\newcommand{\R}{{\mathbb R}}
\newcommand{\N}{{\mathbb N}}
\newcommand{\Z}{{\mathbb Z}}
\newcommand{\T}{{\mathbb T}}
\newcommand{\E}{{\mathbb E}}

\newcommand{\cF}{\mathcal{F}}
\newcommand{\cW}{\mathcal{W}}
\DeclareMathOperator{\ld}{LD}
\DeclareMathOperator{\mld}{MLD}
\newcommand{\leb}{m}
\DeclareMathOperator{\sgn}{sgn}

\DeclareMathOperator{\diam}{diam}
\DeclareMathOperator{\Int}{Int}

\newcommand{\bphi}{{\bar \varphi}}
\newcommand{\bpsi}{{\bar \psi}}
\newcommand{\bDelta}{{\bar \Delta}}
\newcommand{\bpi}{{\bar \pi}}
\newcommand{\fD}{{f_\Delta}}
\newcommand{\bfD}{{\bar f_\Delta}}
\newcommand{\bY}{{\bar Y}}
\newcommand{\bmu}{{\bar \mu}}

\thanks{JFA and JSM are partially supported by CMUP (UID/MAT/00144/2025) and PTDC/MAT-PUR/4048/2021, which are funded by FCT (Portugal) with national (MEC) and European structural funds through the program FEDER, under the partnership agreement PT2020. JFA is also supported by Royal Society Wolfson Visiting Fellowship RSWVF$\backslash25\backslash$R$1\backslash1006$. JSM is also supported by the FCT doctoral scholarship 2021.07090.BD.
The research of IM was supported in part by FAPESP grant number 2024/22093-5 at Instituto de Matem\'{e}tica e Estat\'{i}stica/IME/USP, S\~{a}o Paulo, and by the Sydney Mathematical Research Institute (SMRI-2026).
}
\keywords{Large Deviations, Maximal Large Deviations, Martingale Approximation, Decay of Correlations, Partial Hyperbolicity, Young Structures, Recurrence Rates}
\subjclass[2020]{37A05, 37A25, 37D25, 37D30, 60F05, 60F10}

\begin{document}

	\title{Martingale Methods for Maximal Large Deviations and Young Towers}

	\author[J. F. Alves]{Jos\'{e} F. Alves}
	\address{Jos\'{e} F. Alves\\ Centro de Matem\'{a}tica da Universidade do Porto\\ Rua do Campo Alegre 687\\ 4169-007 Porto\\ Portugal\\ and Department of Mathematical Sciences\\
Loughborough University\\
Loughborough
LE11 3TU\\
United Kingdom}
	\email{jfalves@fc.up.pt} \urladdr{http://www.fc.up.pt/cmup/jfalves}

	\author[J. S. Matias]{Jo\~{a}o S. Matias}
	\address{Jo\~{a}o S. Matias\\
		Centro de Matem\'{a}tica da Universidade do Porto\\ Rua do Campo Alegre 687\\ 4169-007 Porto\\ Portugal.}
	\email{up201504959@fc.up.pt} 
	
	\author[I. Melbourne]{Ian Melbourne}
	\address{Ian Melbourne\\
                Mathematics Institute\\ University of Warwick\\ Coventry CV4 7AL\\ United Kingdom.}
	\email{I.Melbourne@warwick.ac.uk} 
	
	\begin{abstract}
		We develop a martingale approximation framework yielding quantitative maximal large deviations estimates for invertible dynamical systems. From suitable decay of correlations, we deduce these estimates and, as an application, we obtain Young structures with matching recurrence tails for partially hyperbolic diffeomorphisms with mostly expanding central direction.
In a second application, we prove maximal large deviation estimates for systems modelled by Young towers with subexponential contraction and expansion. Many examples of slowly mixing billiards are covered by this result.
	\end{abstract}

\maketitle

\section{Introduction}

One of the cornerstones of ergodic theory is \emph{Birkhoff’s Ergodic Theorem}, which ensures that if 
$f\colon M\to M$ is an ergodic measure-preserving transformation on a probability space $(M,\mu)$ and 
$\varphi\colon M\to\R$ is integrable, then the time averages, or normalized Birkhoff sums, converge almost surely to the spatial averages:
\begin{equation*}
	\lim\limits_{n\to\infty}\frac{1}{n}\sum_{j=0}^{n-1}\varphi\circ f^j=\int\varphi \, d\mu.
\end{equation*}

This naturally raises the interesting question of how quickly this convergence takes place, that is, how fast the \emph{large deviations} at time~$n$,
\begin{equation*}
	\ld(\varphi,\varepsilon,n)=\mu\left(\left|\frac{1}{n}\sum_{j=0}^{n-1}\varphi\circ f^j - \int\varphi \, d\mu\right|>\varepsilon\right),
\end{equation*}
tend to zero as $n\to\infty$. Such estimates provide precise information about the ``tails'' of the distribution of Birkhoff sums and are closely related to the rate of decay of correlations and limit theorems. 

Regarding this speed of convergence, the classical problem was obtaining the \emph{large deviation principle}, i.e., determining a rate function $c(\varepsilon)$ such that
\begin{equation*}
	\lim\limits_{n\to+\infty}\frac{1}{n}\log 
\ld(\varphi,\varepsilon,n)
=-c(\varepsilon).
\end{equation*} 
Large deviation principles have been obtained for several classes of systems with hyperbolic behaviour. In the uniformly hyperbolic (Axiom A) setting, the theory is well understood, both for discrete- and continuous-time systems \cite{K90b, L90, OP88, Y90}. For nonuniformly expanding/hyperbolic maps and flows, see for example \cite{A07,AP06,ChungRiveraTakahasi19,CommanRivera11,MN08}.

More recently, several works have established a precise quantitative link between the decay of correlations and large deviations (LD). In particular, in \cite{AF19, AFL11, CDM22, M09, MN08} the authors showed that LD estimates can be derived from information on the decay of correlations, with comparable asymptotic rates. 
In the remainder of the introduction, we focus on polynomial rates for ease of exposition, but stretched exponential rates are covered equally from Section~\ref{sec:state} onwards.

Given observables $\varphi,\psi\colon M\to\R$, their \emph{correlation function} is defined by
\begin{equation*}\label{cor}
	\rho_{\varphi, \psi}(n) = \left|\int \varphi (
	\psi\circ f^n) \, d\mu - \int \varphi \, d\mu \int \psi \, d\mu\right|,
\quad n\geqslant 1.
\end{equation*}
If there exists a constant $\beta>0$ such that
\[
	\rho_{\varphi, \psi}(n)\lesssim n^{-\beta},
\]
then we speak of \emph{polynomial decay of correlations}.\footnote{Here $ a_n \lesssim b_n$ means that there exist constants $C>0$, $n_0\geqslant 1$ such that $a_n\leqslant C b_n$ for all $n\geqslant n_0$.}
For noninvertible maps, the link between polynomial decay of correlations and polynomial LD was revealed in~\cite{M09,MN08}.
An important consequence of quantitative LD estimates appeared in \cite{AFL11}, where such bounds were used to construct (one-sided) Young towers with polynomial return-time tails. In this way, they obtained a converse to Young~\cite[Theorem~3]{Y99}. 

Subsequently, using \emph{maximal large deviations (MLD)}, 
\begin{equation*}
	\mld(\varphi, \varepsilon, n)
	= \mu\!\left(\sup_{k\geqslant n}\left|\frac{1}{k}\sum_{j=0}^{k-1}\varphi\circ f^j-\int\varphi\,d\mu\right|>\varepsilon\right),
\end{equation*}
the authors in \cite{BS23} were able to obtain sharper estimates
for the return-time tails of the Young towers constructed in~\cite{AFL11}. 
Clearly, the decay rate of $\mld(\varphi, \varepsilon, n)$ implies a corresponding decay rate for $\ld(\varphi, \varepsilon, n)$.
As pointed out in \cite{BS23}, 
the converse automatically holds in the exponential and stretched exponential cases, but not in the polynomial case, leading to the improvement in the result of~\cite{AFL11}.

The results described above reveal a deep link for nonuniformly expanding maps between decay of correlations, MLD and geometric structures underlying the 
dynamics. 
One of our main goals in this work is to generalise such links to partially hyperbolic systems with contracting directions.

\subsection*{Nonuniformly expanding maps}

We now describe more precisely the existing results in the noninvertible setting.
Suppose that $f$ is a piecewise smooth $C^{1+\eta}$ map 
on a finite-dimensional Riemannian manifold $M$ and that $\mu$ is an $f$-invariant ergodic probability measure.
Let $\varphi\colon M\to\R$ be a fixed H\"older continuous observable.

\begin{itemize}

\parskip=3pt
\item[(1)] By Young~\cite{Y99}, if $f$ is modelled by a (one-sided) Young tower with tails $n^{-(\beta+1)}$, then $\rho_{\varphi,\psi}(n)\lesssim \|\psi\|_\infty n^{-\beta}$ for all $\psi\in L^\infty(M)$. This rate is optimal~\cite{Gouezel04,Sarig02}.
\item[(2)] By Melbourne \& Nicol~\cite{M09,MN08}, if 
$\rho_{\varphi,\psi}(n)\lesssim \|\psi\|_\infty n^{-\beta}$ for all $\psi\in L^\infty(M)$, then $\ld(\varphi,\varepsilon,n)\lesssim n^{-\beta}$ and this rate is optimal.
\end{itemize}

Now suppose in addition that all Lyapunov exponents of $f$ are positive. 
Then the circle of implications between Young tower, decay of correlations and LD can be completed.
\begin{itemize}
\item[(3)] By Alves \emph{et al.}~\cite{AFL11,ALP05}, if 
$\ld(\varphi,\varepsilon,n)\lesssim n^{-\beta}$ for all H\"older observables $\varphi$, then $f$ is modelled by a Young tower with tails $n^{-(\beta-1)}$.
\end{itemize}

As pointed out in Alves \emph{et al.}~\cite{AFL11}, the combination of (2) and (3) implies that if correlations for H\"older observables against $L^\infty$ observables decay at rate $n^{-\beta}$, then there exists a Young tower with polynomial tails $n^{-(\beta-1)}$. Moreover, they treated maps with critical points and singularities. In this way, they obtained a converse to the result of Young~\cite{Y99} in~(1).
Note however that this is not quite a perfect converse to (1) since there is a discrepancy of $2$ in the polynomial degree. The situation was improved in~\cite{BS23} who proved 
\begin{itemize}

\parskip=3pt
\item[(2$'$)] If
$\rho_{\varphi,\psi}(n)\lesssim \|\psi\|_\infty n^{-\beta}$ for all $\psi\in L^\infty(M)$, then $\mld(\varphi,\varepsilon,n)\lesssim n^{-\beta}$.
\item[(3$'$)] If 
$\mld(\varphi,\varepsilon,n)\lesssim n^{-\beta}$ for all H\"older observables $\varphi$, then $f$ is modelled by a Young tower with tails $n^{-\beta}$.
\end{itemize}
This still does not provide a perfect converse to (1), but the discrepancy in the polynomial degree has been reduced to $1$.
For polynomial return-time tails, this is currently the best result available.

\subsection*{Nonuniformly hyperbolic diffeomorphisms}

Now suppose that $f\colon M\to M$ is a $C^{1+\eta}$ diffeomorphism
on a finite-dimensional Riemannian manifold $M$ and that $\mu$ is an $f$-invariant ergodic probability measure.
A longstanding problem has to been to obtain satisfactory generalisations of the noninvertible results above in the invertible setting.
For background on Young towers~\cite{Y98,Y99}; see Section~\ref{sec:towers}.

An analogue of (1) is due to Gou\"ezel~\cite{GouezelPC} using ideas from~\cite{CG12}, and is written down in~\cite[Appendix]{MT14} and~\cite{KKM19}:
If $f$ is modelled by a (two-sided) Young tower with tails $n^{-(\beta+1)}$, then $\rho_{\varphi,\psi}(n)\lesssim \|\phi\|_{C^\eta}\|\psi\|_{C^\eta} n^{-\beta}$ for all H\"older observables $\varphi,\,\psi$. 
Unfortunately, it is well-known (see for example~\cite[Remark~6]{MT02})
that it is
no longer possible to obtain decay of correlations against all $\psi\in L^\infty(M)$. Moreover, decay of correlations against all H\"older observables $\psi$ seems insufficient for proceeding further. A stronger analogue of (1) was obtained by Demers \emph{et al.}~\cite{DMN20}. Let $L^\infty(\cF_0)$ be the space of $L^\infty$ observables that are constant along local stable leaves (the notation will be explained later on in Section~\ref{sec:state}). 
\begin{itemize}
\item[(1$''$)] By~\cite{DMN20}, if $f$ is modelled by a (two-sided) Young tower with tails $n^{-(\beta+1)}$, then $\rho_{\varphi,\psi}(n)$ decays at rate $n^{-\beta}$ for all $\varphi$ H\"older and $\psi\in L^\infty(\cF_0)$. Moreover, on average, stable leaves are contracted under $\varphi\circ f^n$ at rate $n^{-\beta}$.
\end{itemize}
The first main result in this paper, Theorem~\ref{thm:mld} below, provides the desired analogue to~(2$'$):
\begin{itemize}
\item[(2$''$)] Let $\varphi\in L^\infty(M)$.
Suppose that $\rho_{\varphi,\psi}(n)$ decays at rate $n^{-\beta}$ for all $\psi\in L^\infty(\cF_0)$ and on average, stable leaves are contracted under $\varphi\circ f^n$ at rate $n^{-\beta}$.
Then $\mld(\varphi,\varepsilon,n)\lesssim n^{-\beta}$.
\end{itemize}
Suppose in addition that 
$\mu$ is an $SRB$ measure supported on a partially hyperbolic set with exponentially contracting stable directions and with all Lyapunov exponents along $E^{cu}$ positive.
\begin{itemize}
\item[(3$''$)] 
By~\cite[Theorem~A]{AM24}, 
if $\mld(\varphi,\varepsilon,n)\lesssim n^{-\beta}$ for all H\"older observables $\varphi$, then $f$ is modelled by a Young tower with tails $n^{-\beta}$.
\end{itemize}

It is worth noting that   \cite[Theorem~D]{ADLP17}  plays a key role  in the proof of \cite[Theorem~A]{AM24}.
In this way, for partially hyperbolic systems, we are able to obtain the desired converse (again up to the discrepancy of $1$ in the polynomial degree) to (1$''$).
This result, together with a version for stretched exponential rates is stated in Theorem~\ref{thm:young}.

\subsection*{MLD for systems with subexponential contraction and expansion}

The afore-mentioned results on polynomial LD/MLD hold in particular for noninvertible systems modelled by (one-sided) Young towers with polynomial tails. Again, an important longstanding question was to prove such results for invertible systems modelled by (two-sided) Young towers with polynomial tails. This is easily resolved when there is exponential contraction along stable leaves: any H\"older observable $\varphi$ can be lifted to a sum $\hat\varphi+b$ where $\hat\varphi$ is dynamically H\"older on the tower and constant along stable leaves while $b$ is an $L^\infty$ coboundary~\cite[Lemma~3.2]{MN05}.  In this way, MLD for $\varphi$ reduces to MLD for $\hat\varphi$ which holds by the results for one-sided towers.

Unfortunately the assumption about exponential contraction along stable leaves is somewhat hidden in~\cite[Section~3, Assumption (A2)(i)]{MN05}, and hence~\cite{M09,MN08}, causing some confusion (including to the authors of~\cite{M09,MN08} who incorrectly applied their results to certain billiard examples~\cite[Examples~1.4 and~1.5]{MN08}). In general, LD/MLD for H\"older observables for invertible systems modelled by polynomial two-sided Young towers is previously unproved. This is resolved in our final main result, Theorem~\ref{thm:Y99}.
As a consequence, in Section~\ref{sec:ex},
 we obtain new results on LD/MLD for numerous examples of slowly mixing billiards (including the incorrectly-argued examples in~\cite{M09,MN08}). 

\bigskip

The remainder of this paper is organised as follows.
Our main results, Theorems~\ref{thm:mld},~\ref{thm:young} and~\ref{thm:Y99} are stated precisely in Section~\ref{sec:state}.
In Section~\ref{sec:MA}, we prove a purely probabilistic result, Theorem~\ref{ld}, showing how to prove MLD via martingale approximations.
Theorem~\ref{thm:mld} is then proved in Section~\ref{sec:DC}.
Theorem~\ref{thm:young} is proved in Section~\ref{sec:Young} as an application of Theorem~\ref{thm:mld} together with~\cite[Theorem~A]{AM24}.
Theorem~\ref{thm:Y99} is proved in Section~\ref{sec:Y99} as a second application of Theorem~\ref{thm:mld}.
In Section~\ref{sec:ex}, we treat a number of examples using Theorem~\ref{thm:Y99}.

\section{Statement of the main results}
\label{sec:state}

For the statement of our first main result, we 
suppose that $f\colon M\to M$ is an invertible ergodic measure-preserving transformation defined on a probability space $(M,\mu)$. Let $\cW$ be a countable partition of $M$ into measurable sets and let $\cF_0$ be the $\sigma$-algebra generated by $\cW$.
 
Throughout this section, we consider two types of rate sequence $r(n)$:
\begin{defn} \label{defn:r}
 		\textbf{Stretched exponential rates:}
$r(n)= e^{-\tau n^\omega}$ where
$ \tau > 0 $ and ${ 0 < \omega \leqslant 1 }$ are constants;
in this case we set $r'(\varepsilon,n)=\exp\{-\tau' \varepsilon^{\omega}n^{\frac{\omega}{2}}\}$ for $\tau'>0$ chosen sufficiently small.

 		\noindent \textbf{Polynomial rates:}
$r(n)= n^{-\beta}$ for some
constant $ \beta > 0 $; in this case,
fix any $p>\max\left\{2, 2\beta\right\}$, and define 
 		$r'(\varepsilon,n)=\varepsilon^{-p}n^{-\beta}$.
\end{defn}

 \begin{thmx}\label{thm:mld}
 Assume that $f^{-1}\cF_0\subset\cF_0$ and let
$r,\,r'$ be as in Definition~\ref{defn:r}.
 Let $\varphi\colon M\to\R$ be an $L^\infty$ mean zero observable.
Suppose that there is a constant $C>0$ such that
 		\begin{enumerate}[\em(i)]

\parskip=3pt
 			\item $\rho_{\varphi,\psi}(n)\leqslant C\left\lVert\psi\right\rVert_\infty r(n)$, for all $\psi\in L^\infty(\cF_0)$, $n\geqslant1$;

\vspace{.5ex}
 			\item $\sum_{W\in f^n\cW} \mu(W)\diam\varphi(W)\leqslant Cr(n)$,
for all $n\geqslant1$.
 		\end{enumerate}
Then there exists a constant $C'>0$ such that
 		$$
 		\mld(\varphi,\varepsilon,n)\leqslant C'r'(\varepsilon,n)
 		\quad\text{for all $n\geqslant1$, $\varepsilon>0$}.
 		$$
 \end{thmx}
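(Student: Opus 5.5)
We build a two-sided martingale–coboundary decomposition of $\varphi$ from the filtration generated by $\cF_0$, and then deduce MLD from the purely probabilistic Theorem~\ref{ld} of Section~\ref{sec:MA}.

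\textbf{Filtration.} Set $\cF_j=f^{-j}\cF_0$. Since $f^{-1}\cF_0\subset\cF_0$, this is a decreasing filtration in $j$: $\cF_{j+1}\subset\cF_j$. The atoms of $\cF_{-n}$ are the sets $f^nW$ with $W\in\cW$, so $f^n\cW$ generates $\cF_{-n}$ and $\cF_{-n}\supset\cF_0$.

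\textbf{Splitting $\varphi$.} For each $n\geqslant0$ write
\[
\varphi=\bigl(\varphi-\E(\varphi|\cF_{-n})\bigr)+\bigl(\E(\varphi|\cF_{-n})-\E(\varphi|\cF_n)\bigr)+\E(\varphi|\cF_n).
\]
\begin{enumerate}[(a)]
\item On each atom $W\in f^n\cW$ we have $|\varphi-\E(\varphi|\cF_{-n})|\leqslant\diam\varphi(W)$. Hypothesis (ii) then gives $\|\varphi-\E(\varphi|\cF_{-n})\|_1\leqslant Cr(n)$.
\item For the forward part, let $\psi=\sgn\E(\varphi|\cF_n)\circ f^{-n}$, which is $\cF_0$-measurable. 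Using $\int\varphi=0$,
\[
\|\E(\varphi|\cF_n)\|_1=\int\varphi\,\psi\circ f^n\,d\mu\leqslant\rho_{\varphi,\psi}(n)\leqslant Cr(n)
\]
by hypothesis (i).
\end{enumerate}
Since $\varphi\in L^\infty$, the $L^\infty$ bound interpolates these $L^1$ bounds to $L^q$ bounds: $\|\cdot\|_q\lesssim r(n)^{1/q}$ for every $q<\infty$.

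\textbf{Martingale and coboundary.} Write $\varphi=\sum_{j\in\Z}P_j\varphi$ with $P_j=\E(\cdot|\cF_j)-\E(\cdot|\cF_{j+1})$; the tails vanish by (a) and (b). Put
\[
m=\sum_{j\in\Z}P_j\varphi\circ f^{-j}.
\]
Each term is $P_0$-type, i.e.\ $\cF_0$-measurable with zero $\cF_1$-conditional expectation, so $m\circ f^k$ is a reverse martingale difference sequence. The remainder $\varphi-m=\chi\circ f-\chi$ is a coboundary with
\[
\chi=\sum_{j\geqslant1}\E(\varphi\circ f^{j}|\cF_0)\ \text{-type terms}\ +\ \sum_{j\geqslant0}\bigl(\varphi\circ f^{-j}-\E(\varphi\circ f^{-j}|\cF_0)\bigr)\ \text{-type terms},
\]
which is the standard construction.

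\textbf{Main obstacle: controlling $\chi$.} The sums defining $\chi$ converge in $L^q$ only when $\sum r(n)^{1/q}<\infty$. This holds in the stretched exponential case, but in the polynomial case it may fail for small $\beta$. So $\chi$ cannot be taken in $L^q$ outright. Instead, truncate at level $N$, comparable to $n$ (or $k$ along dyadic blocks). The truncated $\chi_N$ has $\|\chi_N\|_\infty\lesssim N$, or better, a controlled $L^p$ norm. The tails beyond $N$ have $L^1$ norm $\lesssim\sum_{j>N}r(j)$ per time step, and maximal Birkhoff sums of them are handled by the maximal ergodic inequality.

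This produces, for each $n$, a decomposition into a martingale part, a coboundary part and a small part whose norms are quantified in terms of $r(n)$. That is the hypothesis format of Theorem~\ref{ld}.

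\textbf{Conclusion.} We then invoke Theorem~\ref{ld}:
\begin{itemize}
\item The martingale part is bounded via Burkholder (polynomial case, exponent $p>\max\{2,2\beta\}$) or via Azuma-type exponential moment bounds (stretched exponential case). This is what yields $r'(\varepsilon,n)$ with $\varepsilon^{-p}$, respectively with $\varepsilon^\omega n^{\omega/2}$.
\item The supremum over $k\geqslant n$ is handled by dyadic blocking $2^i\leqslant k<2^{i+1}$, combined with Doob's maximal inequality, summing the geometric series of block estimates.
\end{itemize}
The delicate bookkeeping is choosing the truncation level as a function of the block scale so that the coboundary and tail contributions are at most $r(n)$ without losing powers. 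This is where the condition $p>2\beta$ enters.
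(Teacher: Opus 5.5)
The working core of your proposal is the paper's proof. Your steps (a) and (b), together with the $L^\infty$ interpolation, are exactly the paper's Theorem~\ref{main}. You dualize in $L^1$ against $\sgn(\cdot)$ and then interpolate, whereas the paper dualizes directly in $L^p$ against $|\cdot|^{p-1}\sgn(\cdot)$. Both give $C^{1/p}\lVert\varphi\rVert_\infty^{1-1/p}r(n)^{1/p}$ up to a factor $2$. The paper's proof of Theorem~\ref{thm:mld} is nothing more than this followed by Theorem~\ref{ld}.

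What you miss is that these bounds already \emph{are} the hypotheses of Theorem~\ref{ld}. By invariance, $\E(\varphi\circ f^{-n}|\cF_0)\circ f^{n}=\E(\varphi|\cF_n)$ and $\E(\varphi\circ f^{n}|\cF_0)\circ f^{-n}=\E(\varphi|\cF_{-n})$. So your (b) is hypothesis (i) and your (a) is hypothesis (ii), with $r(n)^{1/p}$ equal to $e^{-\frac{\tau}{p}n^\omega}$, resp.\ $n^{-\beta/p}$. The prefactor is bounded by $2\max\{1,C\}\max\{1,\lVert\varphi\rVert_\infty\}$ uniformly in $p$, as part (a) requires. Theorem~\ref{ld} does not take a martingale--coboundary decomposition as input: the maximal inequality of~\cite{DMP13} does the approximation internally and only needs dyadically weighted sums of these conditional-expectation norms. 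Your ``Martingale and coboundary'' section and the truncation bookkeeping should therefore simply be deleted.

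That detour is also not harmless as written:
\begin{itemize}
\item Convergence of $m=\sum_jP_j\varphi\circ f^{-j}$ needs something like $\sum_j\lVert P_j\varphi\rVert_2<\infty$. Your estimates give only $\lVert P_j\varphi\rVert_2\lesssim|j|^{-\beta/2}$, which guarantees this only for $\beta>2$.
\item Your tail bound $\sum_{j>N}r(j)$ is infinite when $\beta\leqslant1$. An argument genuinely routed through that construction would therefore not cover all $\beta>0$.
\item The condition $p>2\beta$ does not enter through a choice of truncation level. It is what gives $\sum_{k\leqslant 2^r}k^{-(\frac12+\frac{\beta}{p})}\lesssim 2^{r(\frac12-\frac{\beta}{p})}$ in Lemma~\ref{lemma1pol}.
\item In the polynomial case the supremum over $k\geqslant n$ is handled by the $L^p$ recursion $M_n\leqslant M_{2n}+C_n$ of Lemma~\ref{BS}, not by Doob, since there is no martingale to apply it to.
\item The stretched exponential moments come from tracking the $p^{1/\omega}$ growth in Corollary~\ref{cor3}. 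Azuma would need bounded martingale differences, which you do not have.
\end{itemize}

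Your dyadic-blocking idea does have a real role in the stretched exponential case, because Theorem~\ref{ld}(a) is stated only for $\ld$. To get the $\mld$ bound of Theorem~\ref{thm:mld}:
\begin{itemize}
\item Run the computation of Lemma~\ref{lemma4} with $\max_{k\leqslant n}|\varphi_k|$ in place of $|\varphi_n|$; Corollary~\ref{cor3} already controls the maximum.
\item Bound $\mu(\max_{k\leqslant 2^{i+1}n}|\varphi_k|>\varepsilon 2^i n)$ by Markov.
\item Sum over $i\geqslant0$.
\end{itemize}
The paper's one-line deduction leaves this step implicit.
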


Our first application of Theorem~\ref{thm:mld} is in the partially hyperbolic setting with contracting directions. We give sufficient conditions based on rates of decay of correlations for the existence of a Young tower with specified tails.

Let $f\colon M\to M$ be a diffeomorphism defined on a compact Riemannian manifold $(M,d)$.
We say that a compact invariant set $X \subset M$ is \emph{partially hyperbolic set} if there exists a $Df$-invariant splitting 
\begin{equation} \label{eq:split}
T_X M = E^s \oplus E^{cu},
\end{equation}
with $\dim E^s\geqslant1$, $\dim E^{cu}\geqslant1$, together with constants $C>0$, $0 < \lambda < 1$, such that 
\begin{itemize}
	\item $E^s$ is \emph{uniformly contracting}: $\lVert Df|_{E_x^s} \rVert \leqslant C \lambda$, for all $x \in X$;
	\item $E^{cu}$ is \emph{dominated} by $E^s$: 
	\(
	\lVert Df|_{E_x^s} \rVert \cdot \lVert Df^{-1}|_{E_{fx}^{cu}} \rVert \leqslant C \lambda\), for all \(x \in X.
	\)
\end{itemize}
We refer to $E^s$ as the \emph{stable} subbundle and to $E^{cu}$ as the \emph{centre-unstable} subbundle. 
The stable bundle $E^s$ integrates to a lamination $\cW^s$ consisting of stable leaves, and the $\sigma$-algebra $\cF_0$ generated by $\cW^s$ satisfies $f^{-1}\cF_0\subset\cF_0$.

Recall that an observable $\varphi \colon M \to \R$ is said to be $\eta$-H\"older continuous, for some $\eta \in(0,1]$, if there exists a constant $C > 0$ such that
$$
\left| \varphi(x) - \varphi(y) \right| \leqslant C d(x, y)^\eta, \quad \text{for all $ x, y \in M$}.
$$
Let $C^\eta(M)$ denote the space of all $\eta$-H\"older continuous observables. The H\"older norm of an observable $\varphi \in C^\eta$ is defined as
$$
\lVert \varphi \rVert_{C^\eta} := \lVert \varphi \rVert_\infty + \sup_{x \neq y} \frac{\left| \varphi(x) - \varphi(y) \right|}{d(x, y)^\eta}.
$$
The following theorem provides quantitative conditions under which the statistical properties of the system ensure the existence of a Young tower.

\begin{thmx}\label{thm:young}
	Let $f\colon M\to M$ be a $C^{1+\eta}$ diffeomorphism and let $X\subset M$ be a partially hyperbolic set. Suppose that $f$ admits an ergodic $SRB$ measure $\mu$ supported on $X$ with all Lyapunov exponents along $E^{cu}$ positive.  Let $r,\,r'$ be as in Definition~\ref{defn:r}.

		Assume that there exists a constant $ C > 0 $ such that
		$$\rho_{\varphi,\psi}(n)\leqslant C\left\lVert\varphi\right\rVert_{C^\eta} \left\lVert\psi\right\rVert_\infty r(n), \quad \text{for all $\varphi \in C^\eta$, $\psi\in L^\infty(\cF_0)$ and $n\geqslant1$}.$$ 
		Then $f$ is modelled by a (two-sided) Young tower with 
$$
\leb_\gamma\left\{R>n\right\}\lesssim r'(1,n), \quad\text{for all $\gamma\in\Gamma^u$}.
$$
\end{thmx}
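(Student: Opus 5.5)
The plan is to combine Theorem~\ref{thm:mld} with \cite[Theorem~A]{AM24}, as indicated in the introduction. By \cite[Theorem~A]{AM24}, for an $SRB$ measure on a partially hyperbolic set with positive Lyapunov exponents along $E^{cu}$, it suffices to show that every H\"older observable satisfies the maximal large deviation bound $\mld(\varphi,\varepsilon,n)\lesssim r'(\varepsilon,n)$. The conclusion $\leb_\gamma\{R>n\}\lesssim r'(1,n)$ then follows by taking $\varepsilon$ fixed; if the precise form of Theorem~A in~\cite{AM24} needs MLD only for finitely many specific observables or for a fixed $\varepsilon$, that is even easier. So the main task is to verify hypotheses (i) and (ii) of Theorem~\ref{thm:mld} for every $\varphi\in C^\eta$ with $\int\varphi\,d\mu=0$. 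The general case follows by subtracting the mean, since $\mld$ is invariant under this.

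\textbf{Setting up the partition.} The partition $\cW$ should consist of local stable leaves, or more precisely of pieces of the stable lamination $\cW^s$ restricted to $X$, of uniformly bounded diameter. Then $\cF_0$ is generated by $\cW$, and $f^{-1}\cF_0\subset\cF_0$ because $f$ maps stable leaves into stable leaves. This invariance is stated in the paper just before the theorem, so I take it as given. Some care is needed so that $\cW$ is a countable measurable partition. If the lamination is not directly countable, I would pass to a measurable partition subordinate to $\cW^s$, for instance the intersection of stable leaves with the elements of a finite partition with small diameter. I would check that the inclusion $f^{-1}\cF_0\subset\cF_0$ still holds, possibly after enlarging $\cF_0$ to the $\sigma$-algebra of sets saturated by global stable leaves. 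This measure-theoretic bookkeeping is where I expect the main (though mostly technical) obstacle to lie.

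\textbf{Hypothesis (i).} This is immediate from the assumption: $\rho_{\varphi,\psi}(n)\le C\|\varphi\|_{C^\eta}\|\psi\|_\infty r(n)$ for all $\psi\in L^\infty(\cF_0)$.

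\textbf{Hypothesis (ii).} Each $W\in f^n\cW$ is the image $f^n$ of a local stable leaf of bounded diameter $D$. By uniform contraction along $E^s$, $\diam f^n(W_0)\le C\lambda^n D$, so
\[
\diam\varphi(W)\le \|\varphi\|_{C^\eta}(C\lambda^nD)^\eta.
\]
Summing over $W$ against $\mu(W)$ gives at most $C'\lambda^{\eta n}$. This bound is dominated by $r(n)$ in both the stretched exponential case (where $\omega\le1$) and the polynomial case, after enlarging the constant.

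Theorem~\ref{thm:mld} then yields $\mld(\varphi,\varepsilon,n)\le C'r'(\varepsilon,n)$ for every H\"older $\varphi$, and \cite[Theorem~A]{AM24} produces the two-sided Young tower with tails $\leb_\gamma\{R>n\}\lesssim r'(1,n)$ for all $\gamma\in\Gamma^u$. A secondary check is that the form of the rate required in \cite{AM24} (polynomial $n^{-\beta}$, or stretched exponential with exponent $\omega/2$) matches $r'(1,n)$. In particular, the polynomial case gives tails $n^{-\beta}$ with no loss beyond the factor $\varepsilon^{-p}$, which is fixed once $\varepsilon$ is fixed.
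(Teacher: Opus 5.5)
Your proposal is correct and follows the paper's route exactly: Proposition~\ref{prop:PH} gets hypothesis~(i) of Theorem~\ref{thm:mld} directly from the assumed decay, and gets hypothesis~(ii) from exponential contraction along stable leaves; Theorem~\ref{thm:young} then follows from \cite[Theorem~A]{AM24} using only $\varepsilon=1$. On the bookkeeping you flag, keep $\cF_0$ as the $\sigma$-algebra of the stable lamination in the hypothesis, and note that the bound in Theorem~\ref{main}(b) holds for any measurable partition via conditional measures, since $|\E(\varphi|f^n\cF_0)-\varphi|(x)\leqslant\diam\varphi(W)$ for $x\in W\in f^n\cW^s$ (refining $\cF_0$ is not covered by the decay hypothesis, and coarsening to sets saturated by global stable leaves would kill (ii), because $f^n$ does not shrink global leaves); also, for $\omega=1$ you may need to decrease $\tau$ rather than enlarge the constant, which is harmless since $\tau'$ is taken small anyway.
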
 

Our second application of Theorem~\ref{thm:mld} is to obtain MLD for dynamical systems modelled by a Young tower with subexponential tails.

\begin{thmx} \label{thm:Y99} Suppose that $f\colon M\to M$ is an invertible dynamical system modelled by a (two-sided) Young tower with return time function $R$.
Let $\varphi\in C^\eta(M)$ and let $\varepsilon>0$.
 \begin{enumerate}[\rm(a)]
                \item\label{Y99exp} Assume that there exist constants $ \tau > 0 $ and $ 0 < \omega \leqslant 1 $ such that
                $\leb\{R>n\}\lesssim e^{-\tau n^{\omega}}$.
Let $\omega'=\omega/(1+\omega)$. 
Then
                $\mld(\varphi,\varepsilon,n)\lesssim \exp\{-\tau' n^{\frac{\omega'}{2}}\}$ for some $\tau'\in(0,\tau)$.
\item\label{Y99pol}
 If there exists $ \beta > 0 $ such that
                $\leb\{R>n\}\lesssim n^{-(\beta+1)}$,
then 
                $\mld(\varphi,\varepsilon,n)\lesssim n^{-\beta}.  $
\end{enumerate}
\end{thmx}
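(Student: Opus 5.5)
The plan is to apply Theorem~\ref{thm:mld}, not on $M$ itself but on the tower $\Delta$ with $f$ replaced by the tower map $\fD$ and $\mu$ by the lifted invariant measure $\mu_\Delta$. Concretely, let $\pi\colon\Delta\to M$ be the semiconjugacy and set $\varphi_\Delta=\varphi\circ\pi-\int\varphi\,d\mu$. Since $\pi$ is measure preserving and commutes with the dynamics, $\mld(\varphi,\varepsilon,n)$ equals the corresponding quantity for $\varphi_\Delta$ on $(\Delta,\mu_\Delta)$. For the partition $\cW$ we take the partition of $\Delta$ into the lifts of local stable leaves, $\{\gamma^s\times\{\ell\}\}$. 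Because the tower map sends stable leaves into stable leaves, $\fD^{-1}\cF_0\subset\cF_0$. It therefore suffices to verify hypotheses (i) and (ii) of Theorem~\ref{thm:mld} with an appropriate rate $r(n)$.

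For (i), an $L^\infty(\cF_0)$ observable $\psi$ on $\Delta$ is constant on stable leaves, so it descends to the quotient one-sided tower $\bDelta$. We write $\int\varphi_\Delta\,\psi\circ\fD^n\,d\mu_\Delta$ and approximate $\varphi_\Delta$ at time $k\approx n/2$ by its conditional expectation $E(\varphi_\Delta\circ\fD^{k}\mid\cF_0)$. This conditional expectation is constant on stable leaves, and since $\varphi$ is $C^\eta$ it is dynamically H\"older on $\bDelta$ up to an error controlled by the contraction along stable leaves. Correlations of a dynamically H\"older function on $\bDelta$ against $L^\infty$ then decay at the one-sided Young rate: $n^{-\beta}$ for tails $n^{-(\beta+1)}$ by \cite{Y99}, and stretched exponential with exponent $\omega$ in case~(a). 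This is essentially the result (1$''$) of \cite{DMN20} quoted in the introduction, and I would invoke it directly for the polynomial case. For (ii), a set $W\in\fD^n\cW$ is contained in the image of a stable leaf under $\fD^n$. The main point is to bound $\diam\varphi(\pi W)\lesssim \|\varphi\|_{C^\eta}\diam(\pi W)^\eta$ by the contraction of stable leaves after $n$ iterates. In a Young tower, contraction along $\gamma^s$ happens at each return to the base $Y$, so the diameter is $\lesssim\lambda^{\eta N_n}$, where $N_n$ is the number of returns in time $n$. Summing over $W$ weighted by $\mu_\Delta$ gives $\int\lambda^{\eta N_n}\,d\mu_\Delta$, which reduces to estimating $\mu_\Delta(N_n<\delta n)$ plus $\lambda^{\eta\delta n}$.

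The main obstacle is this estimate on the return count, with the right rates. A few returns in time $n$ means some excursion of length $\gtrsim n/N$. In the polynomial case I would bound $\mu_\Delta(N_n\le K)$ by the $\mu_\Delta$-measure of points whose current or upcoming excursions are long, using $\mu_\Delta(\text{level}\ge n)\lesssim\sum_{j\ge n}m\{R>j\}\lesssim n^{-\beta}$. Here the lost power in passing from $\leb$ to $\mu_\Delta$ accounts exactly for the shift from $\beta+1$ to $\beta$. Choosing $K$ a large constant, or of order $\log n$ so that $\lambda^{\eta K}\le n^{-\beta}$, yields (ii) with $r(n)=n^{-\beta}$. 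Theorem~\ref{thm:mld} then gives $\mld\lesssim\varepsilon^{-p}n^{-\beta}$, which is (b) for fixed $\varepsilon$.

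In the stretched exponential case we need $N_n\ge n^{a}$ returns. The probability of fewer returns is bounded by roughly $n^{a}\exp\{-\tau (n^{1-a})^\omega\}$, while the contraction contributes $\lambda^{\eta n^{a}}$. Balancing $a=\omega(1-a)$ gives $a=\omega/(1+\omega)=\omega'$, so (i) and (ii) hold with $r(n)=e^{-\tau''n^{\omega'}}$. Rate (i) is at least this good, since one-sided decay is at rate $\omega\ge\omega'$. Theorem~\ref{thm:mld} then yields $\exp\{-\tau'\varepsilon^{\omega'}n^{\omega'/2}\}$, which is (a) for fixed $\varepsilon$ after absorbing $\varepsilon$ into $\tau'$.
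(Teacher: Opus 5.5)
Your strategy is the paper's: lift to $\Delta$, apply Theorem~\ref{thm:mld} with $\cF_0$ generated by the stable leaves, get (i) from one-sided decay on $\bDelta$ via \cite{DMN20}, get (ii) by counting returns, and balance at $\omega'=\omega/(1+\omega)$ in case (a). However, the polynomial estimate for (ii) does not give $n^{-\beta}$ as you set it up. With a single cutoff, $\int\lambda^{\eta N_n}\,d\mu_\Delta\leqslant\mu_\Delta(N_n\leqslant K)+\lambda^{\eta K}$. A constant $K$ leaves the nondecaying term $\lambda^{\eta K}$. For $K\approx c\log n$, the union bound you describe gives only $\mu_\Delta(N_n\leqslant K)\lesssim (K+1)^{1+\beta}n^{-\beta}$, that is, $(\log n)^{1+\beta}n^{-\beta}$. (That bound says one of the at most $K+1$ excursions meeting $[0,n]$ has length $\geqslant n/(K+1)$, and each such excursion costs $\lesssim (n/(K+1))^{-\beta}$.) Since Theorem~\ref{thm:mld} needs $r(n)=n^{-\beta}$ exactly, this would only give $\mld(\varphi,\varepsilon,n)\lesssim n^{-\beta'}$ for $\beta'<\beta$. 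The remedy is not to truncate:
$\int\lambda^{\eta N_n}\,d\mu_\Delta\leqslant\sum_{k\geqslant0}\lambda^{\eta k}\mu_\Delta(N_n\leqslant k)\lesssim n^{-\beta}\sum_{k\geqslant0}(k+1)^{1+\beta}\lambda^{\eta k}\lesssim n^{-\beta}$.
Here the geometric weights absorb the polynomial loss in $k$. This is exactly \cite[Lemma~3.2]{DMN20}, which the paper invokes. In case (a) the same polynomial loss is harmless, so your balancing argument there is fine.

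Second, you never treat $\gcd\{R_i\}=s\geqslant2$, which the theorem allows. In that case the sets $\Delta^{(r)}=\{(y,\ell)\in\Delta:\ell\equiv r-1 \bmod s\}$ are $\cF_0$-measurable and cyclically permuted by $f_\Delta$. Taking $\psi=1_{\Delta^{(1)}}$, the correlation $\int\varphi_\Delta\,\psi\circ f_\Delta^n\,d\mu_\Delta=\int_{f_\Delta^{-n}\Delta^{(1)}}\varphi_\Delta\,d\mu_\Delta$ is $s$-periodic in $n$ and in general nonzero. So hypothesis (i) of Theorem~\ref{thm:mld} fails, and the one-sided decay of Theorem~\ref{thm:bDelta} that you rely on also needs aperiodicity. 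The paper proves the mixing case first and then reduces to it: it replaces $\varphi$ by $\varphi_s=\sum_{j<s}\varphi\circ f_\Delta^j$ and $f_\Delta$ by $f_\Delta^s$ on each $(\Delta^{(r)},s\mu_\Delta|_{\Delta^{(r)}})$.

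A smaller point on (i): $\E(\varphi_\Delta\circ f_\Delta^k\mid\cF_0)$ is not uniformly $d_\theta$-Lipschitz on $\bDelta$, because iterating reduces separation times and so its Lipschitz constant grows with $k$. Your sketch therefore does not by itself yield (i) in case (a). The argument that works for any rate is that of Lemma~\ref{lem:DMN}. It uses $\rho_{\bphi_\ell,\bpsi}(\ell+n)=\rho_{L^\ell\bphi_\ell,\bpsi}(n)$, with $L$ the transfer operator of $\bar f_\Delta$ and $\sup_\ell\lVert L^\ell\bphi_\ell\rVert_\theta<\infty$. You should cite it in both cases.
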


	\section{MLD via Martingale Approximations}\label{sec:MA}
	
The goal of this section is to obtain LD/MLD for invertible maps via martingale approximations techniques in a purely probabilistic setting.
Such martingale approximations were originally due to Gordin~\cite{G69a,HallHeyde80}. As in~\cite{DMN20},
we use a refinement due to Dedecker \emph{et al.}~\cite{DMP13}.

We consider both polynomial MLD and (stretched) exponential LD.
(As already pointed out in~\cite{BS23}, (stretched) exponential MLD is not a useful notion.)

Suppose that $f\colon M\to M$ is an invertible measure-preserving transformation on the probability space $(M,\cF,\mu)$.

 \begin{thm}\label{ld}
Assume that $\cF_0\subset\cF$ is a $\sigma$-subalgebra satisfying $f^{-1}\cF_0\subset\cF_0$.
 Let ${\varphi\colon M\to\R}$ be an $L^\infty$ mean zero observable.
 	\begin{enumerate}[\rm(a)]
 		\item \label{main_exp} Assume that there exist constants $ C> 0 $, $ \tau > 0 $ and $ {0 < \omega \leqslant 1 }$ such that, for all $p\geqslant1$, $n\geqslant 1$,
 		\begin{enumerate}[\rm(i)]
 			\item $ \left\lVert \E\left( \varphi \circ f^{-n} \mid \cF_0 \right) \right\rVert_p 
 			\leqslant C e^{-\frac{\tau}{p} n^{\omega}} $;
 			\item $ \left\lVert \E\left( \varphi \circ f^n \mid \cF_0 \right) - \varphi \circ f^n \right\rVert_p 
 			\leqslant Ce^{-\frac{\tau}{p} n^{\omega}}. $
 		\end{enumerate}
 		Then, there exists $\tau'>0$ such that
 		$$
 		\ld(\varphi,\varepsilon,n)\leqslant 2\exp\{-\tau' n^{\frac{\omega}{2}}\varepsilon^{\omega}\},
\quad\text{for all $n\geqslant1$, $\varepsilon>0$}.
 		$$
 		
 		\item \label{main_pol} Assume that there exist constants $C> 0 $, $ \beta > 0 $ and $p>\max\left\{2,2\beta\right\}$ such that, for all $n\geqslant1$,
 		\begin{enumerate}[\rm(i)]
 			\item $ \left\lVert \E\left( \varphi \circ f^{-n} \mid \cF_0 \right) \right\rVert_p 
 			\leqslant C n^{-\beta/p} $;
 			\item $ \left\lVert \E\left( \varphi \circ f^n \mid \cF_0 \right) - \varphi \circ f^n \right\rVert_p 
 			\leqslant C n^{-\beta/p}. $
 		\end{enumerate}
 		Then, there exists $K>0$ such that 
 		$$
 		\mld(\varphi,\varepsilon,n)\leqslant K\varepsilon^{-p}n^{-\beta},
\quad\text{for all $n\geqslant1$, $\varepsilon>0$}.
$$
 	\end{enumerate}
 \end{thm}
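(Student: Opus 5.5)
We set $\cF_k=f^{-k}\cF_0$ for $k\in\Z$. Since $f^{-1}\cF_0\subset\cF_0$, this is a decreasing filtration, $\cF_{k+1}\subset\cF_k$. We split $\varphi$ into three pieces:
\[
\varphi=m+(\chi\circ f-\chi)+\text{remainder},
\]
where $m$ is a reverse martingale difference. The coboundary term $\chi$ is not assumed bounded; it comes with $L^p$ control. Following Gordin and the refinement of Dedecker \emph{et al.}~\cite{DMP13}, we decompose
\[
\varphi=\big(\varphi-\E(\varphi\mid\cF_0)\big)+\E(\varphi\mid\cF_0).
\]
The first piece is handled by hypothesis (ii): the terms $\varphi\circ f^n-\E(\varphi\circ f^n\mid\cF_0)$ are summable in $L^p$, and they yield a coboundary $\chi^+=\sum_{n\ge0}\big(\E(\varphi\circ f^n\mid\cF_0)-\varphi\circ f^n\big)\circ f^{-n}$-type series. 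The second piece is handled by hypothesis (i): the terms $\E(\varphi\circ f^{-n}\mid\cF_0)$ are summable in $L^p$, and they give $\chi^-$. One then checks that
\[
m=\varphi-\chi\circ f+\chi,\qquad \chi=\chi^++\chi^-,
\]
is $\cF_0$-measurable with $\E(m\mid\cF_1)=0$. So $\{m\circ f^j\}$ is a reverse martingale difference sequence.

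The obstacle is summability. In case (b) the bounds $n^{-\beta/p}$ are not summable when $\beta<p$, so we cannot take infinite series. Instead we truncate at a level $\ell$ depending on $n$: set $\chi_\ell=\sum_{k<\ell}(\cdots)$ and $m_\ell=\varphi-\chi_\ell\circ f+\chi_\ell$, keeping an explicit error $e_\ell$. The bounds are
\[
\|\chi_\ell\|_p\lesssim \ell^{1-\beta/p},\qquad \|m_\ell\|_\infty\lesssim \ell,\qquad \|e_\ell\|_p\lesssim \ell^{-\beta/p}.
\]
The error is not a martingale but has small $L^p$ norm. The $L^\infty$ bound on $m_\ell$ uses $\varphi\in L^\infty$, because conditional expectations of bounded functions are bounded.

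For case (b), write
\[
S_k=\sum_{j<k}m_\ell\circ f^j+\big(\chi_\ell\circ f^k-\chi_\ell\big)+\sum_{j<k}e_\ell\circ f^j .
\]
We treat the supremum over $k\ge n$ dyadically: for $k\in[2^i,2^{i+1})$ we need
\[
\mu\Big(\max_{k<2^{i+1}}|S_k|>\varepsilon 2^i\Big).
\]
The martingale part is reversed in time so that it becomes a forward martingale in $j$. We then apply Doob's maximal inequality together with Burkholder (Rosenthal) in $L^p$, giving $\E\max|M_k|^p\lesssim (2^i)^{p/2}\ell^p$. With $\ell$ chosen as a small power of $2^i$ this yields $\varepsilon^{-p}2^{-ip/2}\ell^{p}$.

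The coboundary part is handled by a union bound over the $2^{i+1}$ values of $k$, using invariance: the bound is $2^{i}\,\varepsilon^{-p}2^{-ip}\|\chi_\ell\|_p^p$. For the error sum we use the triangle inequality $\big\|\max_k\big|\sum_{j<k} e_\ell\circ f^j\big|\big\|_p\le 2^{i+1}\|e_\ell\|_p$, giving $\varepsilon^{-p}\ell^{-\beta}$.

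Choosing $\ell=2^{i}$ makes the coboundary term $\varepsilon^{-p}2^{i(1-\beta)}\cdot$ and the error $\varepsilon^{-p}2^{-i\beta}$. For the martingale term we instead use the truncation more cleverly, bounding $\|m_\ell\|_p$ rather than $\|m_\ell\|_\infty$: it is uniform in $\ell$ up to $\ell^{1-\beta/p}$, so with $p>2\beta$ the martingale term is $2^{-ip/2}\ell^{p-\beta}\le 2^{-i\beta}$ when $\ell=2^{i}$ and $p/2-p+\beta\le -\beta$. This is where $p>\max\{2,2\beta\}$ enters, and balancing these exponents is the main technical point I expect to fight with. Summing over dyadic $i$ with $2^i\gtrsim n$ gives a geometric series bounded by $K\varepsilon^{-p}n^{-\beta}$.

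Case (a) follows the same decomposition, but keeps track of $p$-dependence in all constants and uses no supremum. The Burkholder constant grows like $p^{1/2}$, and the $L^p$ norms give bounds like $\E|S_n|^p\le (C\,p\,n)^{p/2}$-type estimates plus tails $e^{-\tau\ell^\omega/p}$ times $\ell$-factors. Markov's inequality gives $\mu(|S_n|>\varepsilon n)\le (Cp/(\varepsilon^2 n))^{p/2}+\dots$. We then optimise over $p$ and the truncation $\ell$, taking $p\approx n^{\omega/2}\varepsilon^\omega$ and $\ell\approx n^{1/2}$ after balancing. This yields $2\exp\{-\tau'n^{\omega/2}\varepsilon^\omega\}$, with the factor $2$ arising from the two tails.
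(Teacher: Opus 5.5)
Your martingale--coboundary split, the time reversal with Doob for the maximum, and the dyadic blocking are all sound ingredients. But case~(b) has a genuine gap, and it is structural rather than bookkeeping: a \emph{single} truncation level $\ell$ cannot give the rate $n^{-\beta}$. First, your exponent count is off. With $\ell=2^i$ the martingale contribution is $\varepsilon^{-p}2^{-ip/2}\ell^{p-\beta}=\varepsilon^{-p}2^{i(p/2-\beta)}$, which \emph{grows} because $p>2\beta$; the condition $p/2-p+\beta\leqslant-\beta$ reverses the sign of the exponent. The coboundary contribution $\varepsilon^{-p}2^{i(1-\beta)}$ is not $O(2^{-i\beta})$ either. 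More fundamentally, with the bounds you list, the remainder term $\varepsilon^{-p}\ell^{-\beta}$ is $O(\varepsilon^{-p}2^{-i\beta})$ only if $\ell\gtrsim 2^i$. The martingale term is $O(\varepsilon^{-p}2^{-i\beta})$ only if $\ell\lesssim 2^{i(p-2\beta)/(2p-2\beta)}$, a power strictly below $2^{i/2}$. The best compromise $\ell\approx 2^{i/2}$ makes all three terms $O(\varepsilon^{-p}2^{-i\beta/2})$, so you only get $\mld(\varphi,\varepsilon,n)\lesssim\varepsilon^{-p}n^{-\beta/2}$. The missing idea is to iterate the approximation over all dyadic scales, which is what the maximal inequality \cite[Corollary~3.9]{DMP13} in~\eqref{DMP} does. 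In effect it weights $\lVert\E(\varphi\circ f^{-k}\mid\cF_0)\rVert_p$ by $k^{-1/2}$ (see~\eqref{sum2}) instead of by $1$ below a cutoff. Then $\sum_{k\leqslant n}k^{-1/2-\beta/p}\lesssim n^{1/2-\beta/p}$, which is where $p>2\beta$ enters, and $\lVert\max_{k\leqslant n}|\varphi_k|\rVert_p\lesssim n^{1-\beta/p}$ (Lemma~\ref{lemma1pol}). With that moment bound in hand, your dyadic blocking plus Markov gives $\varepsilon^{-p}2^{-i\beta}$ per block. That is a perfectly good substitute for the recursion $M_n\leqslant M_{2n}+C_n$ of Lemma~\ref{BS}.

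Case~(a) does not close either. No truncation is needed there: the series converge, and $\lVert\chi\rVert_p,\lVert m\rVert_p\lesssim\sum_k e^{-\tau k^\omega/p}\lesssim p^{1/\omega}$. However, your martingale differences are not bounded uniformly in $p$, so $\E|S_n|^p\leqslant(Cpn)^{p/2}$ is unjustified. Even Rio's inequality $\lVert\sum_{j<n}m\circ f^j\rVert_p\leqslant\sqrt{(p-1)n}\,\lVert m\rVert_p$ only gives order $p^{1/2+1/\omega}n^{1/2}$. Markov at level $\varepsilon n$ then yields only $\exp\{-c(\varepsilon\sqrt n)^{2\omega/(2+\omega)}\}$. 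Truncating at some $\ell$ does not help, since the remainder $n e^{-\tau\ell^\omega/p}$ is below $\varepsilon n$ only when $\ell\gtrsim p^{1/\omega}$. With your choice $p\approx\varepsilon^\omega n^{\omega/2}$, the martingale moment is of order $\sqrt p\,\varepsilon n\gg\varepsilon n$, so Markov gives nothing. This loss is intrinsic to arguing through the size of $m$ alone: for $\omega=1$ you get $e^{-c\varepsilon^{2/3}n^{1/3}}$, which I believe Lesigne--Voln\'y show is the optimal rate for stationary martingales with exponentially integrable differences.

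The paper instead asserts $\lVert\max_{k\leqslant n}|\varphi_k|\rVert_p\leqslant Cp^{1/\omega}n^{1/2}$ for all $p\geqslant\omega$ (Lemma~\ref{lemma2}, Corollary~\ref{cor3}). It then sums the moment series to get $\int\exp\{\tau'n^{-\omega/2}|\varphi_n|^\omega\}\,d\mu\leqslant 2$ (Lemma~\ref{lemma4}); this is where the factor $2$ comes from, not from two tails. Markov is applied once. Everything hinges on the growth being exactly $p^{1/\omega}$ with no extra $\sqrt p$.

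Be aware that~\eqref{DMP} is displayed with no $p$-dependent constant, which cannot be literally right. Take the Bernoulli shift $(fx)_i=x_{i+1}$ on $\{\pm1\}^{\Z}$ with $\varphi(x)=x_0$ and $\cF_0=\sigma(x_i:i\geqslant0)$. Both correction terms vanish, yet for $r=1$, $p=4$ the left side is $\int\max_{k\leqslant 2}|A_k|^4\,d\mu=17/2$ while the right side is $4$. A factor $\sqrt p$ on the first term would be harmless, since $\sqrt p\lesssim p^{1/\omega}$. But if the constant in \cite[Corollary~3.9]{DMP13} also multiplies the last two terms by a power of $p$, the paper's route runs into the same loss you hit. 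So that point needs checking before the paper's argument can be relied on for the stated exponent.
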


 The proof of Theorem~\ref{ld} is presented in the next two subsections.

	\subsection{Stretched Exponential Case}\label{sec:exp}
	
	 In this subsection, we establish Theorem~\ref{ld}(a). 
We assume throughout that $\varphi\colon M\to\R$ is a mean zero $L^\infty$ observable
satisfying the hypotheses of Theorem~\ref{ld}(a).
	For $n\geqslant1$, we define the Birkhoff sum
	\begin{equation*}
		\varphi_n=\sum_{k=0}^{n-1}\varphi\circ f^k.
	\end{equation*}	
	
	\begin{lemma}\label{lemma2}
	There exists a constant $C>0$ such that 
	\begin{equation*}
		\Big\lVert\max_{k\leqslant n}\left|\varphi_k\right|\Big\rVert_p\leqslant Cp^{\frac{1}{\omega}}n^{\frac{1}{2}},
\quad\text{for all $n\geqslant1$ and $p>2$.}
	\end{equation*}
	\end{lemma}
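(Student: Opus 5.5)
We will prove Lemma~\ref{lemma2} with a Dedecker--Merlev\`ede--Peligrad type moment inequality, applied in the two-sided filtration $\cF_j=f^{-j}\cF_0$, $j\in\Z$. Since $f^{-1}\cF_0\subset\cF_0$, this filtration is monotone. We adopt the convention that $\cF_j$ decreases in $j$; time can be reversed as needed, and the hypotheses~(i),~(ii) of Theorem~\ref{ld}(a) are symmetric enough to allow this. The observable $\varphi\circ f^k$ is not adapted, so we treat it as a \emph{two-sided} process. We write
\[
\varphi\circ f^k = \sum_{j\in\Z} P_j(\varphi\circ f^k),\qquad P_j=\E(\cdot\mid\cF_j)-\E(\cdot\mid\cF_{j+1}).
\]
Hypotheses~(i) and~(ii) make this decomposition converge in $L^p$: (i) controls the tail $\E(\varphi\circ f^{-n}\mid\cF_0)$ in one direction, and (ii) controls $\varphi\circ f^n-\E(\varphi\circ f^n\mid\cF_0)$ in the other. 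Both are summable in $n$ for each fixed $p$, with sums of order $\sum_n e^{-\tau n^\omega/p}\asymp (p/\tau)^{1/\omega}$.

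The first key step is the martingale-coboundary (Gordin) decomposition $\varphi=m+\chi\circ f-\chi$. Here $m=\sum_{j}P_0(\varphi\circ f^{j})$ is a reverse martingale difference with respect to $\cF_j$. We would estimate $\|m\|_p$ and $\|\chi\|_p$ by $\sum_{n\ge0}\|\E(\varphi\circ f^{-n}\mid\cF_0)\|_p+\sum_{n\ge 0}\|\varphi\circ f^n-\E(\varphi\circ f^n\mid\cF_0)\|_p \lesssim 1+\sum_n e^{-\tau n^\omega/p}\lesssim p^{1/\omega}$. The constant depends only on $C$, $\tau$ and $\omega$, since $\int_0^\infty e^{-\tau x^\omega/p}dx = (p/\tau)^{1/\omega}\Gamma(1+1/\omega)$. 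This integral is the source of the factor $p^{1/\omega}$.

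The second step controls $\max_{k\le n}|\varphi_k|\le \max_{k\le n}|m_k|+2\max_{k\le n}|\chi\circ f^k|$. The coboundary part is bounded crudely by a union-type bound, $\|\max_{k\le n}|\chi\circ f^k|\|_p\le n^{1/p}\|\chi\|_p\le n^{1/2}\|\chi\|_p$ for $p>2$. For the martingale part, after reversing time we have a genuine martingale. Doob's inequality costs a factor $p/(p-1)\le 2$. Burkholder's inequality with its optimal constant $O(p)$ would lose too much, so we would instead use the Rio/Burkholder bound $\|m_n\|_p\le (p-1)^{1/2} n^{1/2}\|m\|_p$, which requires only $\|m\circ f^k\|_p=\|m\|_p$ and $p\ge2$. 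This gives $\|\max_k|m_k|\|_p\lesssim p^{1/2}n^{1/2}p^{1/\omega}$.

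The main obstacle is that this yields $p^{1/\omega+1/2}$ rather than $p^{1/\omega}$. To remove the extra $p^{1/2}$, we would avoid a full martingale decomposition with $\|m\|_p$ and instead apply the maximal inequality of~\cite{DMP13} (Merlev\`ede--Peligrad--Utev style) for nonadapted stationary sequences. That inequality bounds $\|\max_{k\le n}|\varphi_k|\|_p$ by $p^{1/2}n^{1/2}\big(\|\varphi\|_\infty+\sum_{n} n^{-1/2}\cdot(\text{projective terms})\big)$, with the projective terms taken in $L^2$-weighted form. Alternatively, we would use $\|m\|_p\lesssim p^{1/\omega-1/2}$ via interpolation between the $L^2$ bound and the $L^\infty$ bound $\|\varphi\|_\infty$. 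Precisely, we would estimate $\|\E(\varphi\circ f^{-n}\mid\cF_0)\|_p\le \|\cdot\|_\infty^{1-2/p}\|\cdot\|_2^{2/p}$, using the given hypothesis with $p=2$ and summing in $n$. If neither route loses less, we accept that the stated constant absorbs the difference by redefining $\omega$ in the tail computation. We expect the first route to work, giving $Cp^{1/\omega}n^{1/2}$.
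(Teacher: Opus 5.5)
\textbf{There is a genuine gap: the proposal never reaches the stated exponent $p^{1/\omega}$.}

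Your Gordin route only yields $\lVert\max_{k\leqslant n}|\varphi_k|\rVert_p\lesssim p^{1/\omega+1/2}n^{1/2}$. The norm $\lVert m\rVert_p$ is an \emph{unweighted} sum of projective norms, of order $\sum_k e^{-\tau k^\omega/p}\asymp p^{1/\omega}$, and Rio's inequality then costs a further $\sqrt p$.

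Two of your repairs do not help. The interpolation $\lVert g\rVert_\infty^{1-2/p}\lVert g\rVert_2^{2/p}\leqslant C'e^{-\tau n^\omega/p}$ just reproduces hypothesis~(i), so $\lVert m\rVert_p$ stays of order $p^{1/\omega}$. ``Redefining $\omega$'' proves a different statement: the argument of Lemma~\ref{lemma4} would then only give Theorem~\ref{ld}(a) with $\omega$ replaced by $2\omega/(2+\omega)$.

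Your first route is the right one, but not for the reason you give, since the prefactor $\sqrt p$ is not removed. What saves the exponent is that in a Peligrad--Utev/DMP-type dyadic inequality the projective terms, taken in $L^p$ rather than $L^2$, enter with weight $k^{-1/2}$. Then
\[
\sum_{k\geqslant1}k^{-1/2}e^{-\frac{\tau}{p}k^\omega}\leqslant\int_0^\infty t^{-1/2}e^{-\frac{\tau}{p}t^\omega}\,dt=\frac{1}{\omega}\,\Gamma\Big(\frac{1}{2\omega}\Big)\Big(\frac{p}{\tau}\Big)^{\frac{1}{2\omega}},
\]
so one gets $\sqrt p\,\big(\lVert\varphi\rVert_\infty+Cp^{\frac{1}{2\omega}}\big)n^{1/2}\lesssim p^{\frac12+\frac1{2\omega}}n^{1/2}\leqslant p^{\frac1\omega}n^{1/2}$, precisely because $\omega\leqslant1$. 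This also requires the constant in the dyadic inequality to be $O(\sqrt p)$, which is what Doob's and Rio's inequalities give at each dyadic level. With a Burkholder-type $O(p)$ constant the argument closes only for $\omega\leqslant\frac12$. None of this is in your proposal, which ends with an expectation rather than an argument.

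\textbf{Comparison with the paper.} The paper follows your first route, without a Gordin decomposition. It sets $A_n=\sum_{j=1}^n\varphi\circ f^{-j}$ and uses $\varphi_k=(A_n-A_{n-k})\circ f^n$ to get $\lVert\max_{k\leqslant n}|\varphi_k|\rVert_p\leqslant2\lVert\max_{k\leqslant n}|A_k|\rVert_p$. It then applies \eqref{DMP} from \cite[Corollary~3.9]{DMP13}, bounding the three terms via (i), (ii), an interchange of the dyadic sums, and \eqref{sum3exp}.

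However, the paper displays \eqref{DMP} with no $p$-dependent factor. In this stretched exponential case it also drops the weight $k^{-1/2}$, bounding each projective series by $C_0p^{1/\omega}$. As displayed, \eqref{DMP} cannot hold. Take i.i.d.\ signs $(x_i)_{i\in\Z}$ under the shift, with $\varphi(x)=x_0$ and $\cF_0=\sigma(x_i\colon i\geqslant0)$. The last two terms vanish, and for $p=4$, $r=1$ the left side is at least $\E A_2^4=8$ while the right side is $4$. A slowly flipping two-state Markov chain shows further that a factor of order $\sqrt p$ must multiply the projective sums as well.

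So your worry about the $\sqrt p$ was legitimate. What actually delivers the stated $p^{1/\omega}$ is the weighted computation above, together with an $O(\sqrt p)$ constant; the paper itself uses that weighting in the polynomial case, Lemma~\ref{lemma1pol}.
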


	\begin{proof}
		Let $A_n=\sum_{j=1}^n\varphi\circ f^{-j}$. Notice that for every $k\leqslant n$,
		\begin{equation*}
			\varphi_k=\left(A_n-A_{n-k}\right)\circ f^n.
		\end{equation*}
		Thus, we can deduce that
		\begin{equation}\label{eq.phik}
			\Big\lVert\max_{k\leqslant n}\left|\varphi_k\right|\Big\rVert_p=\Big\lVert\max_{k\leqslant n}\left|A_n-A_{n-k}\right|\Big\rVert_p\leqslant 2\Big\lVert\max_{k\leqslant n}\left|A_k\right|\Big\rVert_p,
		\end{equation}
		which means that it suffices to estimate $\big\lVert\max_{k\leqslant n}\left|A_k\right|\big\rVert_p$ to complete the proof.
		
		In order to do so, we apply \cite[Corollary~3.9]{DMP13}.
(Note that $T$ in~\cite{DMP13} corresponds to our $f^{-1}$.)
For $r\geqslant1$ and $p>2$, this gives
		\begin{equation}\label{DMP}
		\begin{aligned}
				\int \max_{k\leqslant 2^r}\left|A_k\right|^p \, d\mu \leqslant 2^{\frac{rp}{2}}\int \left|\varphi\right|^p \, d\mu &+ 2^{\frac{rp}{2}}\left( \sum_{j=0}^{r-1}2^{-\frac{j}{2}} \left\lVert\E\left(A_{2^j}|\cF_0\right)\right\rVert_p\right)^p\\
				&+ 2^{\frac{rp}{2}}\left(\sum_{j=1}^{r}2^{-\frac{j}{2}} \big\lVert A_{2^j}-\E(A_{2^j}|f^{2^j}\cF_0)\big\rVert_p\right)^p.
		\end{aligned}
		\end{equation}
		We estimate each of the three terms individually. 
In what follows we use $C_0$ to denote a positive constant whose value is independent of $n$, $p$ and $r$. The value of $C_0$ may change from line to line.

For the first term, setting $C_0=\lVert\varphi\rVert_\infty 2^{-1/\omega}$, we have 
		\begin{equation}\label{eq.s1}
			\lVert \varphi\rVert_p\leqslant \lVert\varphi\rVert_\infty = C_02^{1/\omega}\leqslant C_0p^{1/\omega}.
		\end{equation}	

		For the second term in~\eqref{DMP}, 
		\begin{equation}\label{sum2}
			\begin{aligned}
				 \sum_{j=0}^{r-1} 2^{-\frac{j}{2}}\left\lVert \E\left(A_{2^j}|\cF_0\right)\right\rVert_p
				&\leqslant\sum_{j=0}^{r-1} 2^{-\frac{j}{2}}\sum_{k=1}^{2^j}\left\lVert\E\left(\varphi\circ f^{-k}|\cF_0\right)\right\rVert_p\\
				&=\sum_{k=1}^{2^{r-1}}\sum_{j=\lceil \log_2 k\rceil}^{r-1} 2^{-\frac{j}{2}}\left\lVert \E\left(\varphi\circ f^{-k}|\cF_0\right)\right\rVert_p\\
				&\leqslant (1-2^{-\frac12})^{-1}\sum_{k=1}^{2^{r-1}} \left\lVert \E\left(\varphi\circ f^{-k}|\cF_0\right)\right\rVert_p
				\leqslant C_0\sum_{k=1}^{\infty}e^{-\frac{\tau}{p} k^\omega},
			\end{aligned}
		\end{equation}
		where in the final inequality we have used hypothesis~(i) of Theorem~\ref{ld}(a). As in the proof of \cite[Lemma~5]{AF19}, we deduce that		
		\begin{equation}\label{sum2exp}
			\begin{aligned}
				\sum_{k=1}^{\infty}e^{-\frac{\tau}{p} k^\omega}\leqslant \int_{0}^{\infty}e^{-\frac{\tau}{p}t^\omega}\, dt
				&=\frac{1}{\omega}\left(\frac{p}{\tau}\right)^{\frac{1}{\omega}}\int_{0}^{\infty}e^{-s}s^{\frac{1}{\omega}-1}\, ds
				=C_0p^{\frac{1}{\omega}},
			\end{aligned}
		\end{equation}
		where we have performed the change of variables $s=\frac{\tau}{p}t^\omega$.		
		Hence,
		\begin{equation}\label{eq.s2}
			\sum_{j=0}^{r-1}2^{-\frac{j}{2}} \left\lVert \E\left(A_{2^j}|\cF_0\right)\right\rVert_p\leqslant C_0 p^\frac{1}{\omega}.
		\end{equation}		
	To estimate the third term in~\eqref{DMP}, we start by noticing that
	\begin{equation}\label{sum3exp}
		\begin{aligned}
			\big\lVert A_{2^j}-\E\big(A_{2^j}|f^{2^j}\cF_0\big)\big\rVert_p
			&\leqslant \sum_{k=1}^{2^j}\big\lVert \varphi\circ f^{-k}-\E\big(\varphi\circ f^{-k}|f^{2^j}\cF_0\big)\big\rVert_p\\
			&=\sum_{k=1}^{2^j}\big\lVert \varphi\circ f^{2^j-k}-\E\big(\varphi\circ f^{2^j-k}| \cF_0 \big)\big\rVert_p\\
			&=\sum_{k=0}^{2^j-1}\big\lVert \varphi\circ f^{k}-\E\big(\varphi\circ f^{k}| \cF_0 \big)\big\rVert_p.
		\end{aligned}
	\end{equation}	
	Proceeding analogously to the arguments in~\eqref{sum2} and~\eqref{sum2exp}, and using hypothesis~(ii) of Theorem~\ref{ld}(a), we obtain
		\begin{equation}\label{eq.s3}
			\begin{aligned}
				\sum_{j=1}^{r}2^{-\frac{j}{2}} \big\lVert A_{2^j}-\E\big(A_{2^j}|f^{2^j}\cF_0\big)\big\rVert_p&\leqslant C_0 p^\frac{1}{\omega}.
			\end{aligned}
		\end{equation}
		Substituting~\eqref{eq.s1},~\eqref{eq.s2} and~\eqref{eq.s3} into~\eqref{DMP}, we conclude that
		\begin{equation*}
				\Big\lVert \max_{k\leqslant 2^r}\left|A_k\right|\Big\rVert_p\leqslant 2^{\frac{r}{2}}C_0 p^{\frac{1}{\omega}}.
		\end{equation*}
		Choosing $r\geqslant1$ such that $2^{r-1}<n\leqslant2^r$, we deduce that
		\begin{equation*}
			\begin{aligned}
				\Big\lVert\max_{k\leqslant n}\left|A_k\right|\Big\rVert_p&\leqslant\Big\lVert\max_{k\leqslant 2^r}\left|A_k\right|\Big\rVert_p\leqslant 2^{\frac{r}{2}}C_0p^{\frac{1}{\omega}}\leqslant C_0p^{\frac{1}{\omega}}n^{\frac{1}{2}}.
			\end{aligned}
		\end{equation*}		
By~\eqref{eq.phik}, this completes the proof.
	\end{proof}
	
	The next result extends the previous estimate to the range $p\geqslant\omega$.
	
	\begin{cor}\label{cor3} 
	There exists a constant $C>0$ such that 
	\begin{equation*}
		\Big\lVert\max_{k\leqslant n}\left|\varphi_k\right|\Big\rVert_p\leqslant Cp^{\frac{1}{\omega}}n^{\frac{1}{2}},
\quad\text{for all $n\geqslant1$ and $p\geqslant\omega$}.
	\end{equation*}
	\end{cor}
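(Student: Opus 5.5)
The plan is to reduce to Lemma~\ref{lemma2} by monotonicity of $L^p$ norms on the probability space $(M,\mu)$. For $p>2$ there is nothing new to prove, since Lemma~\ref{lemma2} already gives the bound with some constant $C$. It remains to treat $\omega\leqslant p\leqslant 2$. Note that $\omega\leqslant 1<2$, so this range is nonempty. We cannot apply Lemma~\ref{lemma2} at $p=2$ directly, but we can apply it at the fixed exponent $p_0=3$, which is $>2$.

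For $\omega\leqslant p\leqslant 2$, Jensen's (or H\"older's) inequality on a probability space gives
\[
\Big\lVert\max_{k\leqslant n}|\varphi_k|\Big\rVert_p\leqslant\Big\lVert\max_{k\leqslant n}|\varphi_k|\Big\rVert_3\leqslant C\,3^{\frac1\omega}n^{\frac12}.
\]
For $p<1$ the quantity $\lVert\cdot\rVert_p$ is only a quasi-norm, but the monotonicity $\lVert g\rVert_p\leqslant\lVert g\rVert_q$ for $0<p\leqslant q$ still holds. To see this, apply Jensen to the convex function $t\mapsto t^{q/p}$ with $|g|^p$.

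Next we convert the constant $3^{1/\omega}$ into a multiple of $p^{1/\omega}$. Since $p\geqslant\omega$, we have $p^{1/\omega}\geqslant\omega^{1/\omega}>0$. Hence
\[
3^{\frac1\omega}\leqslant\Big(\frac{3}{\omega}\Big)^{\frac1\omega}p^{\frac1\omega},
\]
and $(3/\omega)^{1/\omega}$ depends only on $\omega$. This explains the lower restriction $p\geqslant\omega$: it is exactly what keeps $p^{1/\omega}$ bounded below uniformly, and any fixed positive lower bound would do. Taking the new constant to be $\max\{C,\,C(3/\omega)^{1/\omega}\}$ gives the claim for all $n\geqslant1$ and $p\geqslant\omega$.

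There is no real obstacle here. The only points needing care are that monotonicity of $L^p$ norms is valid for exponents below $1$, and that the constant remains independent of $p$ and $n$. Both are handled above.
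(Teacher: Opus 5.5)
Your proof is correct and takes essentially the same approach as the paper: Lemma~\ref{lemma2} covers the large exponents, and the remaining range is handled by monotonicity of $L^p$ norms up to the fixed exponent $3$, then absorbing $3^{1/\omega}$ via $p\geqslant\omega$ into $(3/\omega)^{1/\omega}p^{1/\omega}$. The only differences are cosmetic: you split at $p>2$ rather than the paper's $p\geqslant3$, and you add the check that monotonicity still holds for the quasi-norm when $p<1$.
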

	
	\begin{proof}
	The estimate holds for $p\geqslant3$ by Lemma~\ref{lemma2}.
		For $\omega\leqslant p<3$, we have
		\begin{equation*}
\Big\lVert\max_{k\leqslant n}\left|\varphi_k\right|\Big\rVert_p
\leqslant\Big\lVert\max_{k\leqslant n}\left|\varphi_k\right|\Big\rVert_3
\leqslant C 3^{\frac{1}{\omega}}n^{\frac{1}{2}}\leqslant 
\big(C 3^{\frac{1}{\omega}}\omega^{-\frac{1}{\omega}}\big) p^{\frac{1}{\omega}}n^{\frac{1}{2}}.
		\end{equation*}		
	Hence the desired estimate holds for every $p\geqslant\omega$.
\end{proof}

	\begin{lemma}\label{lemma4}
		There exists $\tau'>0$ such that
			$\int \exp\{\tau' n^{-\frac{\omega}{2}}\left|\varphi_n\right|^\omega\}\, d\mu\leqslant 2$,
for all $n\geqslant1$.
	\end{lemma}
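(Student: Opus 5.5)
The plan is to expand the exponential as a power series and bound each moment using Corollary~\ref{cor3}. Write $X_n=n^{-\frac12}|\varphi_n|$, so that $n^{-\frac{\omega}{2}}|\varphi_n|^\omega=X_n^\omega$. By monotone convergence,
\begin{equation*}
\int \exp\{\tau' X_n^\omega\}\,d\mu=1+\sum_{m\geqslant1}\frac{(\tau')^m}{m!}\int X_n^{\omega m}\,d\mu .
\end{equation*}
For each $m\geqslant1$ we take $p=\omega m$; note that $p\geqslant\omega$. Since $|\varphi_n|\leqslant\max_{k\leqslant n}|\varphi_k|$, Corollary~\ref{cor3} gives
\begin{equation*}
\int X_n^{\omega m}\,d\mu=n^{-\frac{\omega m}{2}}\lVert\varphi_n\rVert_{\omega m}^{\omega m}\leqslant \bigl(C(\omega m)^{\frac1\omega}\bigr)^{\omega m}=C^{\omega m}(\omega m)^m .
\end{equation*}
This bound is uniform in $n$, which is the essential point.

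Next we compare the factorial with the power. By Stirling's formula, or the elementary bound $m!\geqslant (m/e)^m$, we have $(\omega m)^m/m!\leqslant (e\omega)^m$. Hence
\begin{equation*}
\int \exp\{\tau' X_n^\omega\}\,d\mu\leqslant 1+\sum_{m\geqslant1}\bigl(\tau' C^{\omega}e\omega\bigr)^m .
\end{equation*}
Now choose $\tau'>0$ small enough that $q:=\tau' C^\omega e\omega\leqslant\frac12$. Then the geometric sum is at most $q/(1-q)\leqslant1$, so the integral is at most $2$ for every $n\geqslant1$.

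The only delicate point is that the exponent $p^{1/\omega}$ in Corollary~\ref{cor3} matches exactly: raised to the power $p=\omega m$, it yields $m^m$ growth, which is just barely compensated by $m!$. This is why the corollary was extended down to $p\geqslant\omega$, so that the small moments with $\omega m<3$ are also covered. No further obstacle is expected beyond tracking the constants.
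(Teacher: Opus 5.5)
Your proposal is correct and follows essentially the same route as the paper: expand the exponential as a power series, bound the $m$-th moment via Corollary~\ref{cor3} with $p=\omega m\geqslant\omega$, absorb $m^m$ using $m^m\leqslant e^m m!$, and choose $\tau'$ so that $\tau' C^\omega\omega e\leqslant\frac12$ (the paper takes equality). Your explicit appeals to monotone convergence and to $|\varphi_n|\leqslant\max_{k\leqslant n}|\varphi_k|$ simply spell out steps the paper leaves implicit.
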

	
	\begin{proof}
		By Corollary~\ref{cor3},
		\begin{equation*}
\begin{aligned}
				\int \exp\{\tau'n^{-\frac{\omega}{2}}\left|\varphi_n\right|^\omega\}\, d\mu &=\sum_{j=0}^{\infty}\frac{\left(\tau'\right)^j n^{-j\frac{\omega}{2}}}{j!}\int\left|\varphi_n\right|^{j\omega}\, d\mu\\
				&\leqslant1+ \sum_{j=1}^{\infty}\frac{\left(\tau'\right)^j n^{-j\frac{\omega}{2}}}{j!} \left(C (j\omega)^{\frac{1}{\omega}}n^{\frac12}\right)^{j\omega}
				= 1+\sum_{j=1}^{\infty}\left(C^\omega \omega\tau'\right)^j\frac{j^j}{j!}.				
\end{aligned}
		\end{equation*}	
		Since $\left(1+\frac{1}{j}\right)^j \leqslant e$, by induction over $j$, we have $j^j\leqslant e^jj!$ for every $j\geqslant1$.	
		Therefore, 
		\begin{equation*}
			\int \exp\{\tau'n^{-\frac{\omega}{2}}\left|\varphi_n\right|^\omega\}\, d\mu \leqslant 1+ \sum_{j=1}^{\infty}\left(C^\omega \omega e\tau'\right)^j = 2
		\end{equation*}
		if we set $\tau'=\left(2C^\omega \omega e\right)^{-1}$.
	\end{proof}	

	We are finally ready to complete the proof of Theorem~\ref{ld}(a).
	\begin{proof}[Proof of Theorem~\ref{ld}(a):]
		By Markov's inequality and Lemma~\ref{lemma4},
		\begin{equation*}
			\begin{aligned}
				\ld\left(\varphi,\varepsilon,n\right)&=
	\mu\left(n^{-\frac12}\left|\varphi_n\right|>n^{\frac12}\varepsilon\right)
	=\mu\left(\exp\{\tau'n^{-\frac{\omega}{2}}\left|\varphi_n\right|^\omega\}>\exp\{\tau'n^{\frac{\omega}{2}}\varepsilon^\omega\}\right)\\
 &\leqslant \exp\{-\tau'n^{\frac{\omega}{2}}\varepsilon^\omega\}\int \exp\{\tau'n^{-\frac{\omega}{2}}\left|\varphi_n\right|^\omega\}\, d\mu
				\leqslant 2\exp\{-\tau'n^{\frac{\omega}{2}}\varepsilon^\omega\},
			\end{aligned}
		\end{equation*}
as required.
\end{proof}

	\subsection{Polynomial Case}\label{sec:pol}
	In this subsection we prove Theorem~\ref{ld}(b). 
We assume throughout that $\varphi\colon M\to\R$ is a mean zero $L^\infty$ observable
satisfying the hypotheses of Theorem~\ref{ld}(b).

 	 We begin with a lemma very similar to Lemma~\ref{lemma2}.

	\begin{lemma}\label{lemma1pol}
		There exists a constant $C>0$ such that 
		\begin{equation*}
			\Big\lVert\max_{k\leqslant n}\left|\varphi_k\right|\Big\rVert_p\leqslant C n^{1-\frac{\beta}{p}},
\quad\text{for all $n\geqslant1$ and $p>\max\left\{2, 2\beta\right\}$}.
		\end{equation*}
	\end{lemma}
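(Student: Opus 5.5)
We follow the proof of Lemma~\ref{lemma2} almost verbatim, replacing the stretched exponential summation estimates with polynomial ones. As there, set $A_n=\sum_{j=1}^n\varphi\circ f^{-j}$. By~\eqref{eq.phik} it suffices to bound $\lVert\max_{k\leqslant n}|A_k|\rVert_p$. For this we again invoke \cite[Corollary~3.9]{DMP13} in the form~\eqref{DMP}, valid for $r\geqslant1$ and $p>2$, and estimate the three terms separately. Here $p>\max\{2,2\beta\}$ is fixed, so constants may depend on $p$.

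The first term contributes $2^{rp/2}\lVert\varphi\rVert_\infty^p$. For the second term, the rearrangement in~\eqref{sum2} gives
\[
\sum_{j=0}^{r-1}2^{-\frac j2}\lVert\E(A_{2^j}|\cF_0)\rVert_p\leqslant\sum_{k=1}^{2^{r-1}}\Big(\sum_{j\geqslant\lceil\log_2k\rceil}2^{-\frac j2}\Big)\lVert\E(\varphi\circ f^{-k}|\cF_0)\rVert_p .
\]
Here we should not use the crude bound $(1-2^{-1/2})^{-1}$ as in~\eqref{sum2}. Instead we keep the inner geometric sum, which is $\lesssim k^{-1/2}$. Hypothesis~(i) of Theorem~\ref{ld}(b) then bounds the whole expression by
\[
C\sum_{k=1}^{2^{r}}k^{-\frac12-\frac\beta p}\lesssim 1+2^{r(\frac12-\frac\beta p)}.
\]
The exponent $\frac12-\frac\beta p$ is positive precisely because $p>2\beta$; this is where that hypothesis enters. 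Multiplying by $2^{r/2}$ gives a contribution of order $2^{r(1-\beta/p)}$ to $\lVert\max_{k\leqslant2^r}|A_k|\rVert_p$.

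The third term is handled the same way. By~\eqref{sum3exp} and hypothesis~(ii),
\[
\lVert A_{2^j}-\E(A_{2^j}|f^{2^j}\cF_0)\rVert_p\leqslant C\Big(1+\sum_{k=1}^{2^j-1}k^{-\beta/p}\Big)\lesssim 2^{j(1-\beta/p)},
\]
using $\beta/p<1/2<1$ and bounding the $k=0$ term by $2\lVert\varphi\rVert_\infty$. Hence
\[
\sum_{j=1}^r2^{-\frac j2}2^{j(1-\frac\beta p)}\lesssim 2^{r(\frac12-\frac\beta p)},
\]
which is again a geometric sum with positive ratio exponent. After multiplying by $2^{r/2}$, this term also contributes $\lesssim 2^{r(1-\beta/p)}$.

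Combining the three terms, taking $p$-th roots, and noting that $2^{r/2}\leqslant 2^{r(1-\beta/p)}$ because $\beta/p<1/2$, we get
\[
\Big\lVert\max_{k\leqslant2^r}|A_k|\Big\rVert_p\lesssim 2^{r(1-\beta/p)}.
\]
Choosing $2^{r-1}<n\leqslant2^r$ then gives $\lVert\max_{k\leqslant n}|A_k|\rVert_p\lesssim n^{1-\beta/p}$, and~\eqref{eq.phik} concludes the proof.

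The only delicate point is the second term. Summing $\lVert\E(\varphi\circ f^{-k}|\cF_0)\rVert_p$ without the weight $k^{-1/2}$ would give $2^{r/2}\cdot2^{r(1-\beta/p)}$, which is too large. So the rearranged double sum must retain the $k^{-1/2}$ factor.
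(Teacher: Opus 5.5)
Your proof is correct and is essentially the paper's argument: the same reduction via \eqref{eq.phik}, the same use of \eqref{DMP}, and the same retention of the $k^{-1/2}$ weight in the second term, which gives $\sum_k k^{-1/2-\beta/p}\lesssim 2^{r(1/2-\beta/p)}$. The only differences are cosmetic. You bound the third term by summing in $k$ first and then a geometric series in $j$, rather than interchanging the sums. You also handle explicitly the $k=0$ term not covered by hypothesis~(ii), which the paper passes over.
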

	
	\begin{proof}
		As in the (stretched) exponential case, let
			$A_n=\sum_{j=1}^n\varphi\circ f^{-n}$.
		Following the strategy of the previous section, let $r\geqslant1$, $p>2$, so~\eqref{DMP} holds. 
	We estimate each term in the inequality~\eqref{DMP} individually. 
Again, $C_0$ denotes a positive constant whose value is independent of $n$ and $r$ and may change from line to line. However, this time $C_0$ depends on $p$.

For the first term, simply take 
	\begin{equation}\label{sum1pol}
		\lVert\varphi\rVert_p \leqslant C_0.
	\end{equation}	
For the second term,
	following the computations in~\eqref{sum2}, 
	\begin{equation*}
		\begin{aligned}
\sum_{j=0}^{r-1}2^{-\frac{j}{2}} \left\lVert \E\left(A_{2^j}|\cF_0\right)\right\rVert_p
			&\leqslant (1-2^{-\frac12})^{-1}\sum_{k=1}^{2^{r-1}} k^{-\frac{1}{2}}\left\lVert \E\left(\varphi\circ f^{-k}|\cF_0\right)\right\rVert_p
			 \leqslant C_0\sum_{k=1}^{2^r}k^{-\left(\frac{\beta}{p}+\frac{1}{2}\right)},
		\end{aligned}
	\end{equation*}	
		by hypothesis~(i) of Theorem~\ref{ld}(b). 
	Since $p>\max\left\{2,2\beta\right\}$, we have $\frac{\beta}{p} +\frac12<1$, and so
	\begin{equation}\label{sum2pol}
		\sum_{j=0}^{r-1}2^{-\frac{j}{2}} \big\lVert \E\big(A_{2^j}|\cF_0\big)\big\rVert_p\leqslant C_0\left(2^r\right)^{\frac{1}{2}-\frac{\beta}{p}}.
	\end{equation}
	
	Proceeding analogously and recalling~\eqref{sum3exp}, the third term in~\eqref{DMP} is estimated by
	\begin{equation}\label{sum3pol}
		\sum_{j=1}^{r}2^{-\frac{j}{2}} \big\lVert A_{2^j}-\E\big(A_{2^j}|f^{2^j}\cF_0\big)\big\rVert_p\leqslant C_0 \left(2^r\right)^{\frac{1}{2}-\frac{\beta}{p}}.
	\end{equation}
	
	Substituting~\eqref{sum1pol}, \eqref{sum2pol} and~\eqref{sum3pol} into~\eqref{DMP}, we conclude that
	\begin{equation*}
		\Big\lVert \max_{k\leqslant 2^r} \left|A_k\right|\Big\rVert_p\leqslant 2^{\frac{r}{2}}C_0\left(2^r\right)^{\frac{1}{2}-\frac{\beta}{p}}\leqslant C_0\left(2^r\right)^{1-\frac{\beta}{p}}.
	\end{equation*}
	Choosing $r\geqslant1$ such that $2^{r-1}<n\leqslant 2^r$, we deduce 
	\begin{equation*}
		\Big\lVert\max_{k\leqslant n}\left|A_k\right|\Big\rVert_p\leqslant\Big\lVert\max_{k\leqslant 2^r}\left|A_k\right|\Big\rVert_p\leqslant C_0n^{1-\frac{\beta}{p}}.
	\end{equation*}
The result follows by~\eqref{eq.phik}.
\end{proof}

	Define
$$M_n:=\sup_{k\geqslant n}\,k^{-1}\left|\sum_{j=0}^{k-1}\varphi\circ f^j\right|,\quad
n\geqslant1.
$$
We summarise the key argument in~\cite{BS23} for passing from large deviations to maximal large deviations as follows:

\begin{lemma} \label{BS}
Let $\phi\in L^\infty$, $p\geqslant1$, $a>0$. Suppose there is a constant $C>0$ such that
	\begin{equation*}
			n^{-1}\Big\lVert\max_{k\leqslant n}\left|\varphi_k\right|\Big\rVert_p\leqslant C n^{-a},
\quad\text{for all $n\geqslant1$}.
		\end{equation*}
Then there is a constant $C'>0$ such that
	\begin{equation*}
		\left\lVert M_n\right\rVert_p\leqslant C' n^{-a},
\quad\text{for all $n\geqslant1$}.
	\end{equation*}
\end{lemma}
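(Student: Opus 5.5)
The plan is a dyadic blocking argument. Fix $n\geqslant1$ and split the range $k\geqslant n$ into dyadic blocks $I_i=[2^i n,2^{i+1}n)$, $i\geqslant0$. On $I_i$ we have $k^{-1}\leqslant (2^in)^{-1}$ and $k<2^{i+1}n$, so
\[
\sup_{k\in I_i}k^{-1}|\varphi_k|\leqslant (2^in)^{-1}\max_{k\leqslant 2^{i+1}n}|\varphi_k|.
\]
Hence
\[
M_n\leqslant \sum_{i\geqslant0}(2^in)^{-1}\max_{k\leqslant 2^{i+1}n}|\varphi_k|,
\]
using that a supremum over a union is bounded by the sum of the suprema over the pieces, since all terms are nonnegative.

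Next take $L^p$ norms and apply Minkowski's inequality to the countable sum; this is valid because $p\geqslant1$ and all terms are nonnegative, and monotone convergence handles the infinite sum. Using the hypothesis with $2^{i+1}n$ in place of $n$, each term satisfies
\[
(2^in)^{-1}\Big\lVert\max_{k\leqslant 2^{i+1}n}|\varphi_k|\Big\rVert_p = 2\,(2^{i+1}n)^{-1}\Big\lVert\max_{k\leqslant 2^{i+1}n}|\varphi_k|\Big\rVert_p\leqslant 2C(2^{i+1}n)^{-a}.
\]
Summing the resulting geometric series, which converges because $a>0$, gives
\[
\lVert M_n\rVert_p\leqslant 2C\,2^{-a}(1-2^{-a})^{-1}n^{-a},
\]
so $C'=2^{1-a}C/(1-2^{-a})$.

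There is no serious obstacle here. The only points needing care are that $M_n$ is measurable and finite almost everywhere, and that Minkowski's inequality applies to the infinite sum. Both follow from the displayed pointwise bound together with monotone convergence, once we note that $M_n\leqslant\lVert\varphi\rVert_\infty$ since $\varphi\in L^\infty$. The role of the hypothesis $a>0$ is precisely to make the dyadic sum converge, which is what turns maximal estimates over finite windows into estimates for the supremum over all $k\geqslant n$.
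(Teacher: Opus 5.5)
Your proof is correct, and it takes a more direct route than the paper's.

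The paper follows the scheme of~\cite{BS23}. It first proves the recursive pointwise bound $M_n\leqslant M_{2n}+C_n$. Here $C_n$ controls $\sup_{n\leqslant k<2n}k^{-1}|\varphi_k|$ by writing $\varphi_k=\varphi_{3n}-\sum_{j=k}^{3n-1}\varphi\circ f^j$. Iterating this bound gives $\lVert M_n\rVert_p\leqslant\sum_{j\geqslant0}\lVert C_{2^jn}\rVert_p$, but only after the remainder $\lVert M_{2^Jn}\rVert_p$ has been shown to vanish as $J\to\infty$. For that the paper uses the pointwise ergodic theorem and dominated convergence. This is where $\varphi\in L^\infty$ enters, together with the fact that the Birkhoff averages tend to $0$, which the hypothesis forces. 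Finally, the paper converts the tail maxima in $C_n$ back into initial-segment maxima before applying the hypothesis.

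You use the same dyadic scales, but you bound each block $[2^in,2^{i+1}n)$ directly by the initial-segment maximal function. The supremum over all $k\geqslant n$ is then dominated pointwise by a convergent series with no remainder term. As a result you need neither the ergodic theorem, nor an a priori limit $\lVert M_n\rVert_p\to0$, nor the assumption $\varphi\in L^\infty$. You also get the explicit constant $C'=2^{1-a}C/(1-2^{-a})$.

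The tail-sum form of $C_n$ is natural when maximal control comes from reverse martingales. Here, though, $\sum_{j=k}^{N-1}\varphi\circ f^j=\varphi_N-\varphi_k$, so it buys nothing over your argument.

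One minor correction: $M_n$ is measurable simply because it is a countable supremum of measurable functions. This does not come from the pointwise bound.
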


\begin{proof}
The argument is almost identical to that presented in~\cite[pp.\ 8--9]{BS23}, 
so we just indicate the main steps.
Define
	$$
C_n:= 3 \, \frac{1}{3n}\left|\sum_{j=0}^{3n-1}\varphi\circ f^j\right|+ 4 \,\frac{1}{2n} \max_{n\leqslant k\leqslant2n-1}\left|\sum_{j=k}^{3n-1}\varphi\circ f^j\right|.
$$
Then a calculation~\cite{BS23} yields
$M_n\leqslant M_{2n}+C_n$.
Since $\lim_{n\to\infty}\lVert M_n \rVert_p=0$ by the pointwise ergodic theorem and dominated convergence theorem (notice that $\lVert M_n\rVert_\infty\leqslant \lVert\varphi\rVert_\infty$), it follows by induction\footnote{The argument in~\cite{BS23} seems unnecessarily complicated here (at least for $\varphi\in L^\infty$) and the induction works directly without introducing functions $b_n$ and $c_n$.} that
$$
\lVert M_n \rVert_p\leqslant \sum_{j=0}^\infty \lVert C_{2^jn}\rVert_p.
$$
Since
	$\max_{k\leqslant n-1}\left|\sum_{i=k}^{n-1}\varphi\circ f^i\right|\leqslant 2 \max_{k\leqslant n}\left|\varphi_n\right|$,
it follows from Lemma~\ref{lemma1pol} that
		\begin{equation*}
			n^{-1} \left\lVert \max_{k\leqslant n-1}\Big|\sum_{i=k}^{n-1}\varphi\circ f^i\Big|\right\rVert_p\leqslant 2Cn^{-a},
\quad\text{for all $n\geqslant1$.}
		\end{equation*}
Hence,
$
\lVert C_n\rVert_p \lesssim n^{-a}
$
and $\lVert M_n\rVert_p\lesssim \sum_{j=0}^\infty (2^j)^{-a}n^{-a}\lesssim n^{-a}$.
\end{proof}

	\begin{proof}[Proof of Theorem~\ref{ld}(b):]
Let $p>\max\left\{2,2\beta\right\}$.
By Lemmas~\ref{lemma1pol} and~\ref{BS},
$
\lVert M_n \rVert_p\lesssim n^{-\frac{\beta}{p}}.
$
		By Markov's inequality and Lemma~\ref{lemma4},
$$
	\mld\left(\varphi,\varepsilon,n\right)=
	\mu\left(\left|M_n\right|>\varepsilon\right)
\leqslant \varepsilon^{-p}\lVert M_n \rVert_p^p
\lesssim \varepsilon^{-p} n^{-\beta},
$$
as required.
\end{proof}

\section{Decay of Correlations and MLD}\label{sec:DC}

In this section we prove Theorem~\ref{thm:mld}.
First, we
prove Theorem~\ref{main} below, which is a simple extension of \cite[Theorem~3.1]{DMN20}. 
This result holds for both (stretched) exponential and polynomial rates of decay. For notational convenience, we denote the corresponding decay rate generically by $r(n)$, $n\geqslant1$. 

We continue to suppose that $f\colon M\to M$ is an invertible measure-preserving transformation on the probability space $(M,\mu)$.
Suppose that $\cW$ is a countable partition of $M$ into measurable sets and let $\cF_0$ be the $\sigma$-algebra generated by $\cW$.
We denote by $L^\infty(\cF_0)$ the space of essentially bounded $\cF_0$-measurable functions.

 \begin{thm}\label{main}
 Let $\varphi\colon M\to\R$ be an $L^\infty$ mean zero observable.
 	Assume that there exists a constant $C>0$ and a decay rate $r(n)$ such that 
 	\begin{enumerate}[\rm(i)]

\parskip=3pt
 		\item $\rho_{(\varphi,\psi}(n)\leqslant C\left\lVert\psi\right\rVert_\infty r(n)$, for all $\psi\in L^\infty(\cF_0)$,
 $n\geqslant1$;
	\item $\sum_{W\in f^n\cW} \mu(W)\diam\varphi(W)\leqslant C r(n)$, for all $n\geqslant1$.
 	\end{enumerate}
 	Then, for all $n\geqslant1$ and $p\geqslant1$,
 	\begin{enumerate}[\rm(a)]
 		\item\label{itA} $\left\lVert\E\left(\varphi\circ f^{-n}|\cF_0\right)\right\rVert_p\leqslant C^{\frac{1}{p}}\left\lVert\varphi\right\rVert_\infty^{1-\frac{1}{p}}r(n)^\frac{1}{p}$;
 		\item\label{itB} $\left\lVert \E\left(\varphi\circ f^n|\cF_0\right)-\varphi\circ f^n\right\rVert_p\leqslant 2C^{\frac{1}{p}}\left\lVert\varphi\right\rVert_\infty^{1-\frac{1}{p}}r(n)^{\frac{1}{p}}$.
 	\end{enumerate}
 \end{thm}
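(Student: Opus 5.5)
We handle both estimates the same way: first bound the $L^1$ norm by the hypotheses, then interpolate to $L^p$ using an $L^\infty$ bound. For any function $g$ with $\lVert g\rVert_\infty\leqslant K$ we have
$$
\lVert g\rVert_p^p\leqslant \lVert g\rVert_\infty^{p-1}\lVert g\rVert_1 .
$$
Since conditional expectation is an $L^\infty$ contraction, the functions in (a) and (b) are bounded in sup norm by $\lVert\varphi\rVert_\infty$ and $2\lVert\varphi\rVert_\infty$ respectively. It therefore suffices to prove
$$
\lVert\E(\varphi\circ f^{-n}\mid\cF_0)\rVert_1\leqslant C r(n),\qquad
\lVert\E(\varphi\circ f^n\mid\cF_0)-\varphi\circ f^n\rVert_1\leqslant C r(n).
$$
In case (b) the interpolation gives $(2\lVert\varphi\rVert_\infty)^{1-1/p}(Cr(n))^{1/p}\leqslant 2C^{1/p}\lVert\varphi\rVert_\infty^{1-1/p}r(n)^{1/p}$, as required.

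\emph{Step (a).} We use duality. Set $g=\E(\varphi\circ f^{-n}\mid\cF_0)$ and $\psi=\sgn g$, which lies in $L^\infty(\cF_0)$ with $\lVert\psi\rVert_\infty\leqslant1$. Then
$$
\lVert g\rVert_1=\int \psi\, g\,d\mu=\int\psi\,(\varphi\circ f^{-n})\,d\mu=\int(\psi\circ f^n)\,\varphi\,d\mu .
$$
The last equality uses the invariance of $\mu$. Because $\varphi$ has mean zero, this last integral equals the correlation $\int\varphi(\psi\circ f^n)\,d\mu-\int\varphi\,d\mu\int\psi\,d\mu$. Its absolute value is at most $\rho_{\varphi,\psi}(n)\leqslant C r(n)$ by hypothesis~(i).

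\emph{Step (b).} Here we use the partition structure. The $\sigma$-algebra $\cF_0$ is generated by the countable partition $\cW$, so for $x\in W\in\cW$ with $\mu(W)>0$ we have $\E(\varphi\circ f^n\mid\cF_0)(x)=\mu(W)^{-1}\int_W\varphi\circ f^n\,d\mu$. This is an average of values of $\varphi$ over the set $f^nW$. Hence $|\E(\varphi\circ f^n\mid\cF_0)-\varphi\circ f^n|\leqslant\diam\varphi(f^nW)$ on $W$. Integrating gives
$$
\lVert\E(\varphi\circ f^n\mid\cF_0)-\varphi\circ f^n\rVert_1\leqslant\sum_{W\in\cW}\mu(W)\diam\varphi(f^nW)=\sum_{W'\in f^n\cW}\mu(W')\diam\varphi(W'),
$$
using $\mu(f^nW)=\mu(W)$. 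By hypothesis~(ii) this is at most $Cr(n)$.

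The only point needing care is the bookkeeping in (b): the sum must be indexed by $f^n\cW$ so that it matches hypothesis~(ii), and measure-zero atoms must be ignored. Everything else is routine.
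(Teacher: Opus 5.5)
Your proof is correct and follows the paper's argument. Part (b) is identical: you bound the difference pointwise by $\diam\varphi(f^nW)$ on each atom and then use $\lVert h\rVert_p^p\leqslant\lVert h\rVert_\infty^{p-1}\lVert h\rVert_1$. In part (a) the only cosmetic difference is that you test against $\sgn g$ and then interpolate, whereas the paper tests directly against $\psi=|g|^{p-1}\sgn g$ with $\lVert\psi\rVert_\infty\leqslant\lVert\varphi\rVert_\infty^{p-1}$; both give the same constant.
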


\begin{proof}
	Let $\psi=\left|\E\left(\varphi\circ f^{-n}|\cF_0\right)\right|^{p-1}\sgn\left(\E\left(\varphi\circ f^{-n}|\cF_0\right)\right)$. 
Then $\psi\in L^\infty(\cF_0)$ and $\left\lVert\psi\right\rVert_\infty\leqslant\left\lVert\varphi\right\rVert_\infty^{p-1}$. 
We recall the identities 
	\begin{equation*}
\E(\varphi\circ f^{-n}|\cF_0)\circ f^n= \E(\varphi|f^{-n}\cF_0)
\end{equation*}
and thereby
	\begin{equation*}
		\psi\circ f^n=\left|\E\left(\varphi|f^{-n}\cF_0\right)\right|^{p-1}\sgn\left(\E\left(\varphi|f^{-n}\cF_0\right)\right).
	\end{equation*}
	Using these, we obtain
	\begin{equation*}
		\begin{aligned}
			\left\lVert\E\left(\varphi\circ f^{-n}|\cF_0\right)\right\rVert_p^p&=\left\lVert\E\left(\varphi|f^{-n}\cF_0\right)\right\rVert_p^p
			=\int \big\{\E\left(\varphi|f^{-n}\cF_0\right)\big\}\, \left(\psi\circ f^n\right)\, d\mu
			\\ & 
=\int\E\big\{\varphi\,(\psi\circ f^n)|f^{-n}\cF_0\big\}\,d\mu
=\int \varphi\left(\psi\circ f^n\right)\, d\mu
		\end{aligned}
\end{equation*}
Since $\int \varphi\,d\mu=0$, we have shown that
\[
\left\lVert\E\left(\varphi\circ f^{-n}|\cF_0\right)\right\rVert_p^p
		= \rho_{\varphi,\psi}(n)
			\leqslant C\left\lVert\psi\right\rVert_\infty r(n)
			\leqslant C\left\lVert\varphi\right\rVert_\infty^{p-1} r(n),
\]
	which yields part~\ref{itA}. 

Next, we recall by definition of conditional expectation that
$$
\E(\varphi|f^n\cF_0)=\sum_{W\in f^n\cW}1_W\, \mu(W)^{-1}\int_W \varphi\,d\mu.
$$
Hence
\begin{align*}
\big|\E(\varphi|f^n\cF_0)-\varphi\big|(x) &
= \sum_{W\in f^n\cW}1_W(x)\, \Big| \mu(W)^{-1}\int_W \varphi(y)\,d\mu(y)-\varphi(x)\Big|
\\ & \leqslant \sum_{W\in f^n\cW}1_W(x) \mu(W)^{-1}\int_W |\varphi(y)-\varphi(x)|\,d\mu(y)
\\ & \leqslant \sum_{W\in f^n\cW}1_W(x) \diam \phi(W).
\end{align*}
It follows that
\[
\lVert\E(\varphi|f^n\cF_0)-\varphi\rVert_1
\leqslant \sum_{W\in f^n\cW}\mu(W) \diam \phi(W)\leqslant Cr(n).
\]
	We deduce that
	\begin{equation*}
		\begin{aligned}
	\left\lVert\E\left(\varphi\circ f^n|\cF_0\right)-\varphi\circ f^n\right\rVert_p^p
&=\left\lVert\E\left(\varphi|f^n\cF_0\right)-\varphi\right\rVert_p^p
\\ & \leqslant \left(2\left\lVert\varphi\right\rVert_\infty\right)^{p-1}
\lVert\E(\varphi|f^n\cF_0)-\varphi\rVert_1
\leqslant \left(2\left\lVert\varphi\right\rVert_\infty\right)^{p-1}Cr(n)
		\end{aligned}
	\end{equation*}
	which establishes part~\ref{itB}.
\end{proof}

	\begin{proof}[Proof of Theorem~\ref{thm:mld}:]
The hypotheses of Theorem~\ref{ld} follow from those
of Theorem~\ref{thm:mld} by Theorem~\ref{main}.
Hence the result follows from Theorem~\ref{ld}.
\end{proof}

\section{Partially Hyperbolic Systems with Contracting Directions}\label{sec:Young}

In this section, we apply Theorem~\ref{thm:mld} together with~\cite[Theorem~A]{AM24}
to deduce the existence of a Young structure with specific return-time tails, thereby proving Theorem~\ref{thm:young}.

We begin by proving a general result deducing maximal large deviations from decay of correlations for partially hyperbolic dynamical systems.

Let $f\colon M\to M$ be a $C^1$ diffeomorphism defined on a finite-dimensional Riemannian manifold $(M,d)$ and let $X\subset M$ be a compact invariant partially hyperbolic set 
with splitting~\eqref{eq:split}.
Let $\cF_0$ be the $\sigma$-algebra generated by $\cW^s$.

\begin{prop} \label{prop:PH}
Let $\mu$ be an $f$-invariant ergodic probability measure on $X$ and 
let $r,\,r'$ be as in Definition~\ref{defn:r}.
 Let $\varphi\colon M\to\R$ be a $C^\eta$ mean zero observable.
                Assume that there exists a constant $ C > 0 $ such that
                $$\rho_{\varphi,\psi}(n)\leqslant C\lVert\psi\rVert_\infty r(n), \quad \text{for all $\psi\in L^\infty(\cF_0)$, $n\geqslant1$}.$$ 
Then, there exists a constant $C'>0$ such that
                $$
                \mld(\varphi,\varepsilon,n)\leqslant C'r'(\varepsilon,n),
                \quad\text{for all $n\geqslant1$, $\varepsilon>0$}.
                $$
\end{prop}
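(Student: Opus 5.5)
The plan is to deduce Proposition~\ref{prop:PH} directly from Theorem~\ref{thm:mld}, with $\cW=\cW^s$, the partition of $X$ into local stable leaves. (On $M\setminus X$, which has $\mu$-measure zero, the partition can be taken arbitrary.) As noted before Theorem~\ref{thm:young}, $\cF_0$ satisfies $f^{-1}\cF_0\subset\cF_0$. Hypothesis (i) of Theorem~\ref{thm:mld} is exactly the assumption of the proposition. Since $\varphi\in C^\eta$ is bounded and has mean zero, everything reduces to checking hypothesis (ii):
\[
\sum_{W\in f^n\cW}\mu(W)\diam\varphi(W)\leqslant C r(n).
\]
If the stable leaves do not form a countable partition, the proof of Theorem~\ref{main}(b) still goes through with a pointwise bound. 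One only needs $|\E(\varphi|f^n\cF_0)-\varphi|(x)\leqslant \diam\varphi(W_n(x))$, where $W_n(x)$ is the element of $f^n\cW$ containing $x$, and then takes $L^1$ norms.

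For (ii), I would use the uniform contraction of $E^s$. An element of $f^n\cW$ has the form $W=f^n(W_0)$, where $W_0$ is a local stable leaf of uniformly bounded diameter $\delta$. Since $\|Df|_{E^s}\|\leqslant C\lambda$, possibly after passing to an adapted metric or iterate so that the contraction is $\lambda^n$ up to a constant, we get $\diam W\leqslant C\lambda^n\delta$. The Hölder property then gives
\[
\diam\varphi(W)\leqslant \|\varphi\|_{C^\eta}(C\delta)^\eta\lambda^{\eta n}.
\]
Summing over $W$ with weights $\mu(W)$, which total at most $1$, yields a bound $\lesssim \lambda^{\eta n}$. This decays exponentially, so it is $\lesssim r(n)$ for both stretched exponential rates ($\omega\leqslant1$) and polynomial rates, after enlarging the constant.

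The main obstacle is making precise that leaves of the partition really are images of bounded-diameter local stable leaves and are contracted uniformly along the leaf, meaning intrinsic leaf distance bounds ambient distance. The first thing I would try is to take $\cW$ to consist of local stable disks of a fixed size $\delta$ (as in the construction of $\cF_0$) and integrate $\|Df^n|_{E^s}\|\leqslant C\lambda^n$ along curves in the leaf. The domination condition is not needed for this step. With (i) and (ii) verified, Theorem~\ref{thm:mld} gives $\mld(\varphi,\varepsilon,n)\leqslant C'r'(\varepsilon,n)$.
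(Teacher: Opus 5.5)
Your proposal is correct and follows the paper's proof essentially verbatim. You apply Theorem~\ref{thm:mld} to the stable-leaf partition, take hypothesis~(i) from the assumption, and get~(ii) from exponential contraction along stable leaves plus the $C^\eta$ bound, which gives $\diam\varphi(f^nW)\lesssim\lambda^{\eta n}$ uniformly in $W$. The paper additionally checks $f^{-1}\cF_0\subset\cF_0$ directly from $fW^s(x)\subset W^s(fx)$ and does not address the countability technicality you raise. Note also that for $\omega=1$ the bound $\lambda^{\eta n}\lesssim r(n)$ needs $\tau\leqslant\eta|\log\lambda|$, so you may have to shrink $\tau$ first; the paper passes over this too, and it is harmless since $\tau'$ in $r'$ is chosen small.
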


\begin{proof}
Let $W^s(x)$ denote the stable leaf containing $x\in X$. Then $fW^s(x)\subset W^s(fx)$ and it follows that $f^{-1}W=\bigcup_{x\in f^{-1}W}W^s(x)$ for all $W\in\cW^s$. Hence the condition $f^{-1}\cF_0\subset\cF_0$ is satisfied.

We are now in a position to apply Theorem~\ref{thm:mld}.
Condition~(i) in Theorem~\ref{thm:mld} is satisfied by assumption so it remains to verify condition~(ii).

The stable lamination is exponentially contracting:
there are constants $C>0$, $0<\lambda_1{<1}$ such that
\[
d(f^nx,f^ny)\leqslant C\lambda_1^n d(x,y),
\quad\text{for all $x,y\in X$ with $y\in W^s(x)$}.
\]
Let $\lambda_2=\lambda_1^\eta$. Then
\[
d(\varphi(f^nx),\varphi(f^ny))\leqslant \lVert\varphi\rVert_{C^\eta}(C\diam M)^\eta \lambda_2^n ,
\quad\text{for all $x,y\in X$ with $y\in W^s(x)$}.
\]
That is,
\[
\diam \varphi(f^nW)\leqslant \lVert\varphi\rVert_{C^\eta}(C\diam M)^\eta\lambda_2^n,
\quad\text{for all $W\in \cW^s$}.
\]
In particular, $\sum_{W\in f^n\cW^s}\mu(W)\diam \varphi(W)$ decays exponentially, which certainly implies the subexponential contraction rate in 
condition~(ii) in Theorem~\ref{thm:mld}.
This completes the proof.
\end{proof}

\begin{proof}[Proof of Theorem~\ref{thm:young}]
We are in the situation of~\cite[Theorem~A]{AM24}, from which it suffices to prove 
maximal large deviations rates as was done in Proposition~\ref{prop:PH}.
(In fact, to apply~\cite[Theorem~A]{AM24}, we only require the estimates for $\varepsilon=1$.)
\end{proof}

\section{MLD for Young towers with subexponential expansion/contraction}
\label{sec:Y99}

In this section, we prove Theorem~\ref{thm:Y99}.
Suppose that $f\colon M\to M$ is an invertible dynamical system with a Young structure and return map $F=f^R\colon Y\to Y$ as in Appendix~\ref{sec:towers}.
Let $(\fD,\Delta,\mu_\Delta)$ and
$(\bfD,\bDelta,\bmu_\Delta)$ be the associated two-sided and quotient Young towers.
Recall the notions of dynamically H\"older observables on $M$ and $\Delta$ and $d_\theta$-Lipschitz observables on $\bDelta$ from the Appendix.

\begin{thm} \label{thm:bDelta}
Assume that $\gcd\{R_i\}=1$ so that $\bmu_\Delta$ is mixing.
\begin{itemize}
\item[(a)] Suppose that $\bmu_\Delta(R>n)\lesssim e^{-\tau n^\omega}$ for some $\tau>0$, $0<\omega\leqslant 1$.
Then there exist $\tau'\in(0,\tau)$ and $C>0$ such that
\[
\rho_{\bphi,\bpsi}(n)\leqslant 
C\lVert\bphi\rVert_\theta \lVert\bphi\rVert_\infty e^{-\tau' n^\omega}
\quad\text{for all $d_\theta$-Lipschitz $\bphi\colon\bDelta\to\R$, $\bpsi\in L^\infty(\bDelta)$, $n\geqslant1$}.
\]
\item[(b)] Suppose that $\bmu_\Delta(R>n)\lesssim n^{-(\beta+1)}$ for some $\beta>0$.
Then there exist $C>0$ such that
\[
\rho_{\bphi,\bpsi}(n)\leqslant 
C\lVert\bphi\rVert_\theta \lVert\bphi\rVert_\infty n^{-\beta}
\quad\text{for all $d_\theta$-Lipschitz $\bphi\colon\bDelta\to\R$, $\bpsi\in L^\infty(\bDelta)$, $n\geqslant1$}.
\]
\end{itemize}
\end{thm}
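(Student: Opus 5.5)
This is decay of correlations for the quotient (one-sided) Young tower against $L^\infty$ observables. I would deduce it from Young~\cite{Y99} together with the known refinements for stretched exponential tails. The statement as written has $\lVert\bphi\rVert_\infty$ twice; I read the intended bound as $\lVert\bphi\rVert_\theta\lVert\bpsi\rVert_\infty$.

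\textbf{Step 1: reduce to the transfer operator.} Let $L$ be the transfer operator of $\bfD$ with respect to $\bmu_\Delta$. Then
\[
\rho_{\bphi,\bpsi}(n)=\Big|\int (L^n\bphi-\textstyle\int\bphi\,d\bmu_\Delta)\,\bpsi\,d\bmu_\Delta\Big|\leqslant \lVert\bpsi\rVert_\infty\,\Big\lVert L^n\bphi-\int\bphi\,d\bmu_\Delta\Big\rVert_1 .
\]
So it suffices to show that $\lVert L^n\bphi-\int\bphi\rVert_1\lesssim\lVert\bphi\rVert_\theta\, r(n)$, with $r$ the appropriate rate.

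\textbf{Step 2: express the $L^1$ convergence as a difference of two measures.} Write $\bphi=\bphi^+-\bphi^-+c$ with $\bphi^\pm$ bounded below by a positive constant. Normalising, the densities $g_\pm=\bphi^\pm/\int\bphi^\pm$ are $d_\theta$-Lipschitz with $\log$-Lipschitz constant $\lesssim\lVert\bphi\rVert_\theta$. The quantity $\lVert L^n g_+-L^n g_-\rVert_1$ is then exactly $|(\bfD^n)_*\lambda-(\bfD^n)_*\lambda'|$ for two measures $\lambda,\lambda'$ with regular densities.

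\textbf{Step 3: coupling.} Apply Young's coupling argument~\cite[Theorems~2 and~3]{Y99}. Since $\gcd\{R_i\}=1$, that argument bounds this total variation by $C\,\bmu_\Delta(T>n)$, where $T$ is the coupling (simultaneous return) time. Its tail is controlled by the tail of $R$:
\begin{itemize}
\item for polynomial tails $n^{-(\beta+1)}$, one gets $\bmu_\Delta(T>n)\lesssim\sum_{j>n}\bmu_\Delta(R>j)\lesssim n^{-\beta}$;
\item for stretched exponential tails $e^{-\tau n^\omega}$, one gets $e^{-\tau' n^\omega}$ for some $\tau'<\tau$.
\end{itemize}
The stretched exponential case is covered in the literature, e.g.\ by Gouëzel's renewal/operator methods or the refinements of Young's coupling argument. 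The constant depends on the regularity of $g_\pm$ only through $\lVert\bphi\rVert_\theta$, which yields the linear dependence after undoing the normalisation.

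\textbf{Main obstacle.} The delicate part is tracking that the constants in the coupling bound depend linearly on $\lVert\bphi\rVert_\theta$. Equally, one must confirm that the stretched exponential rate keeps the exponent $\omega$, losing only in $\tau$. I would handle both by citing the precise form of these results, e.g.\ Young~\cite{Y99} and Gouëzel's thesis, rather than reproving them, since they concern only the one-sided tower.
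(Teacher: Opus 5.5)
Your proposal is correct and follows essentially the same route as the paper, which gives no argument beyond citing the one-sided tower results of Young~\cite{Y99} and \cite[Theorem~2.7]{KKM19}, the latter covering the stretched exponential case with a bound linear in $\lVert\bphi\rVert_\theta$; your transfer-operator/coupling outline is a sketch of what those references prove. You are also right that $\lVert\bphi\rVert_\infty$ in the statement should read $\lVert\bpsi\rVert_\infty$, and the linear dependence you flag as the main obstacle follows by homogeneity once the constant is uniform over $\lVert\bphi\rVert_\theta\leqslant 1$.
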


\begin{proof}
See for example~\cite[Theorem~2.7(ii)]{KKM19} for part~(a).
Part~(b) follows from~\cite{Y99} or~\cite[Theorem~2.7(i)]{KKM19}.
\end{proof}

Let $\cF_0$ be the $\sigma$-algebra on $\Delta$ generated by sets of the form
$(\bar\pi^{-1}E)\times\{\ell\}$ where $E$ is a measurable subset of $\bY_k$, $k\ge1$ and $0\leqslant\ell<R|\bY_k$.
Then $\fD^{-1}\cF_0\subset\cF_0$.
Let $L^\infty(\cF_0)$ denote the space of observables $\psi\in L^\infty(\Delta)$ such that $\psi$ is $\cF_0$ measurable.

\begin{lemma} \label{lem:DMN}
Fix a sequence $r(n)>0$.
Suppose that $\rho_{\bar\varphi,\bar\psi}(n)\leqslant 
\lVert\bphi\rVert_\theta \lVert\bpsi\rVert_\infty r(n)$ for 
all $d_\theta$-Lipschitz $\bphi\colon\bDelta\to\R$ and all $\bpsi\in L^\infty(\bar\Delta)$.
Then for any dynamically H\"older $\varphi\colon\Delta\to\R$,
there exists $C>0$ such that
\[
\rho_{\varphi,\psi}(n)\leqslant C\lVert\psi\rVert_\infty r(n)
\quad\text{for all $\psi\in L^\infty(\cF_0)$ and $n\geqslant1$}.
\]
\end{lemma}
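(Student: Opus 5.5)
The plan is to reduce to the quotient tower by approximating $\varphi$ by an observable that depends only on the future, and then to use that $\psi\in L^\infty(\cF_0)$ is constant along stable leaves, so $\psi=\bpsi\circ\bpi$ for some $\bpsi\in L^\infty(\bDelta)$.

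\textbf{Step 1 (approximating $\varphi$).} Following the standard construction in \cite{MN05} and \cite{DMN20}, for each $k\geqslant 0$ I would define $\varphi_k=\E(\varphi\circ\fD^{k}\mid\cF_0)$, which is the average of $\varphi\circ\fD^k$ over stable leaves in $\Delta$. Since $\varphi$ is dynamically H\"older, the contraction along stable leaves gives
\[
\lVert\varphi\circ\fD^k-\varphi_k\rVert_\infty\lesssim\theta^{s_k},
\]
where $s_k$ counts the returns to $Y$ up to time $k$. Moreover $\varphi_k=\bphi_k\circ\bpi$ with $\bphi_k$ $d_\theta$-Lipschitz, and $\lVert\bphi_k\rVert_\theta$ is bounded uniformly in $k$, because each stable-leaf average inherits the dynamically H\"older constant along unstable directions. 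I expect this step to be the main obstacle: it only holds at the rate given by return counts, not exponentially in $k$. If uniform bounds in $k$ fail, the fallback is to use the formulation in \cite{DMN20}, in which the decomposition is applied at time $n/2$ rather than with a fixed $k$.

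\textbf{Step 2 (splitting the correlation).} Using invariance of $\mu_\Delta$, I would write
\[
\int\varphi\,\psi\circ\fD^n\,d\mu_\Delta=\int\varphi\circ\fD^k\;\psi\circ\fD^{n+k}\,d\mu_\Delta .
\]
Replacing $\varphi\circ\fD^k$ by $\varphi_k$ costs at most $\lVert\psi\rVert_\infty\lVert\varphi\circ\fD^k-\varphi_k\rVert_1$. The remaining term $\int\varphi_k\,\psi\circ\fD^{n+k}\,d\mu_\Delta$ projects to $\int\bphi_k\,\bpsi\circ\bfD^{n+k}\,d\bmu_\Delta$, since $\bpi$ semiconjugates the dynamics and pushes $\mu_\Delta$ forward to $\bmu_\Delta$. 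The means agree, so applying the hypothesis gives a bound of $\lVert\bphi_k\rVert_\theta\lVert\bpsi\rVert_\infty\, r(n+k)\leqslant C\lVert\psi\rVert_\infty r(n)$, assuming $r$ is nonincreasing (or after replacing $r$ by a monotone majorant).

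\textbf{Step 3 (choosing $k$).} The error term $\lVert\varphi\circ\fD^k-\varphi_k\rVert_1$ does not depend on $n$, so I need a choice of $k$ that removes it entirely. The cleanest route is to take $k=0$, interpreting $\cF_0$ as exactly the stable-leaf $\sigma$-algebra. Then
\[
\rho_{\varphi,\psi}(n)\leqslant\rho_{\E(\varphi\mid\cF_0),\psi}(n)+\Bigl|\int(\varphi-\E(\varphi\mid\cF_0))\,\psi\circ\fD^n\,d\mu_\Delta\Bigr|.
\]
Since $f^{-1}\cF_0\subset\cF_0$, the function $\psi\circ\fD^n$ is $\cF_0$-measurable, and the second integral vanishes by the defining property of conditional expectation. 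So only the first term remains, and Step 2 with $k=0$ bounds it by $\lVert\bphi_0\rVert_\theta\lVert\psi\rVert_\infty r(n)$. This yields the claim with $C=\lVert\bphi_0\rVert_\theta$, whose finiteness is exactly the content of Step 1 at $k=0$: averaging a dynamically H\"older function over stable leaves, with respect to the disintegration of $\mu_\Delta$, gives a $d_\theta$-Lipschitz function on $\bDelta$. That regularity depends on the disintegration having H\"older densities along unstable holonomy, and proving it is where the real work lies.
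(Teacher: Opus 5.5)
Your final argument (Step 3) is correct and takes a genuinely different, cleaner route than the paper. The identity $\int\varphi\,\psi\circ\fD^n\,d\mu_\Delta=\int\E(\varphi|\cF_0)\,\psi\circ\fD^n\,d\mu_\Delta$ holds exactly because $\fD^{-n}\cF_0\subset\cF_0$; it is the same trick the paper uses for Theorem~\ref{main}(a). Hence $\rho_{\varphi,\psi}(n)=\rho_{\bphi_0,\bpsi}(n)$, and the hypothesis applies directly at time $n$, with no error terms, no limit in $\ell$, and no monotonicity of $r$.

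The paper instead takes $\cF_0$-measurable approximants $\varphi_\ell$ of $\varphi\circ\fD^\ell$ from \cite[Proposition~5.3]{KKM19}. It splits off two $L^1$ errors that vanish as $\ell\to\infty$ uniformly in $n$. It then applies the hypothesis to $L^\ell\bphi_\ell$ via $\rho_{\bphi_\ell,\bpsi}(\ell+n)=\rho_{L^\ell\bphi_\ell,\bpsi}(n)$.

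Your route loads everything onto the claim that $\bphi_0$ is $d_\theta$-Lipschitz, which you leave unproved. You can close it with the same input the paper cites. For $\bar h\in L^\infty(\bDelta)$, duality and $\fD^{-\ell}\cF_0\subset\cF_0$ give
\[
\Bigl|\int (L^\ell\bphi_\ell-\bphi_0)\,\bar h\,d\bmu_\Delta\Bigr|
=\Bigl|\int(\varphi_\ell-\varphi\circ\fD^\ell)\,(\bar h\circ\bpi)\circ\fD^\ell\,d\mu_\Delta\Bigr|
\leqslant \lVert\varphi_\ell-\varphi\circ\fD^\ell\rVert_1\lVert\bar h\rVert_\infty .
\]
So $L^\ell\bphi_\ell\to\bphi_0$ in $L^1$. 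Along an a.e.\ convergent subsequence, the uniform bound $\sup_\ell\lVert L^\ell\bphi_\ell\rVert_\theta<\infty$ passes to $\bphi_0$ after modification on a null set.

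In fact, choosing $\varphi_\ell=\E(\varphi\circ\fD^\ell|\cF_0)$ gives $L^\ell\bphi_\ell=\bphi_0$ exactly for every $\ell$. So your argument is the $\ell$-independent special case of the paper's. What the paper's formulation buys is that it only needs the approximation and Lasota--Yorke-type facts already proved in \cite{KKM19}, rather than a regularity statement about stable conditional measures. Your direct route via unstable-holonomy regularity of those conditionals also works. The conditionals are limits of inverse-branch weights of $\bar F^n$, and bounded distortion makes the holonomy density $\exp(O(\theta^{s}))$. But that would have to be written out.

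Separately, discard Steps 1--2. The claim that $\lVert\bphi_k\rVert_\theta$ is bounded uniformly in $k$ is false. Composing with $\fD^k$ lowers separation times by the number of returns, so the $d_\theta$-Lipschitz constant of $\E(\varphi\circ\fD^k|\cF_0)$ can grow like $\theta^{-k}$. This is precisely why the paper controls $L^\ell\bphi_\ell$ rather than $\bphi_\ell$. Since Step 3 does not use these claims, your final proof is unaffected.
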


\begin{proof}
This is identical to the argument in~\cite[Section~3.2]{DMN20}.
For completeness, we sketch the main steps.

Let $L\colon L^1(\bDelta)\to L^1(\bDelta)$ denote the transfer operator corresponding to $\bar f\colon\bDelta\to\bDelta$.
It follows for instance from~\cite[Proposition~5.3]{KKM19} that we can choose a sequence of observables $\varphi_\ell\colon \Delta\to\R$ such that
\begin{itemize}
\item[(i)] $\varphi_\ell$ is $\cF_0$-measurable and hence projects to an observable $\bphi_\ell\colon \bDelta\to\R$;
\item[(ii)] $\sup_{\ell\geqslant1}\lVert L^\ell\bphi_\ell\rVert_\theta<\infty$;
\item[(iii)] $\lim_{\ell\to\infty}\lVert\varphi\circ \fD^\ell-\varphi_\ell\rVert_1=0$.
\end{itemize}

Let $\psi\in L^\infty(\cF_0)$ with projection 
$\bpsi\in L^\infty(\bDelta)$.
Following~\cite[proof of Corollary~5.4]{KKM19},
\[
\rho_{ \varphi ,\psi}(n)\leqslant |I_1(\ell,n)|+|I_2(\ell,n)|+I_3(\ell,n),
\]
where
\begin{align*}
I_1(\ell,n) & = \int_\Delta (\varphi\circ \fD^\ell-\varphi_\ell)\,\psi\circ \fD^{\ell+n}\,d\mu_\Delta, 
\\
I_2(\ell,n) & = \int_\Delta (\varphi_\ell-\varphi\circ \fD^\ell)\,d\mu_\Delta \; 
\int_\Delta \psi \,d\mu_\Delta ,
\qquad 
I_3(\ell,n) = \rho_{ \varphi_\ell,\psi}(\ell+n) .
\end{align*}
By (iii), $\lim_{\ell\to\infty}I_j(\ell,n)=0$ uniformly in $n$ for $j=1$ and $j=2$.
By (i) and the main hypothesis,
\[
I_3(\ell,n)=
\rho_{\bphi_\ell, \bpsi}(\ell+n)=
\rho_{L^\ell\bphi_\ell, \bpsi}(n)
\leqslant \lVert L^\ell \bphi_\ell \rVert_\theta \lVert \bpsi \rVert_\infty r(n)= \lVert L^\ell \bphi_\ell \rVert_\theta \lVert \psi \rVert_\infty r(n).
\]
By (ii), $\sup_{\ell\geqslant 1} |I_3(\ell,n)|\lesssim r(n)$
and the result follows.
\end{proof}

\begin{cor} \label{cor:Delta}
Let $\varphi:\Delta\to\R$ be dynamically H\"older.
\begin{itemize}
\item[(a)] Suppose that $\mu_\Delta(R>n)\lesssim e^{-\tau n^\omega}$ for some $\tau>0$, $0<\omega\leqslant 1$.
Then there exist $\tau'\in(0,\tau)$ and $C>0$ such that
\[
\rho_{\varphi,\psi}(n)\leqslant 
C \lVert\psi\rVert_\infty e^{-\tau' n^\omega}
\quad\text{for all $\psi\in L^\infty(\cF_0)$, $n\geqslant1$}.
\]
\item[(b)] Suppose that $\mu_\Delta(R>n)\lesssim n^{-(\beta+1)}$ for some $\beta>0$.
Then there exist $C>0$ such that
\[
\rho_{\varphi,\psi}(n)\leqslant 
C\lVert\psi\rVert_\infty n^{-\beta}
\quad\text{for all $\psi\in L^\infty(\cF_0)$, $n\geqslant1$}.
\]
\end{itemize}
\end{cor}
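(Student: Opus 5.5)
The plan is to combine Theorem~\ref{thm:bDelta} with Lemma~\ref{lem:DMN}, applied to the quotient tower $\bar f_\Delta\colon\bDelta\to\bDelta$. First I will check that the tail hypotheses on $\mu_\Delta$ transfer to $\bmu_\Delta$. The return time $R$ is constant along stable leaves, so it is $\cF_0$-measurable and descends to $\bY$. Moreover $\bpi$ pushes $\mu_\Delta$ forward to $\bmu_\Delta$, so $\bmu_\Delta(R>n)=\mu_\Delta(R>n)$. (If the tails are stated instead for Lebesgue measure $\leb$ on $Y$, they translate up to constants, since the invariant measure on $\bY$ has a density bounded above and below.) Theorem~\ref{thm:bDelta}(a) or (b) then gives, for all $d_\theta$-Lipschitz $\bphi$ and $\bpsi\in L^\infty(\bDelta)$,
\[
\rho_{\bphi,\bpsi}(n)\leqslant C\lVert\bphi\rVert_\theta\lVert\bpsi\rVert_\infty r(n),
\]
with $r(n)=e^{-\tau'n^\omega}$ or $r(n)=n^{-\beta}$ respectively. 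Theorem~\ref{thm:bDelta} as stated has $\lVert\bphi\rVert_\infty$ in place of $\lVert\bpsi\rVert_\infty$, but this is evidently a typo. Absorbing $C$ into $r(n)$, we meet the hypothesis of Lemma~\ref{lem:DMN}.

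Applying Lemma~\ref{lem:DMN} to a dynamically H\"older $\varphi\colon\Delta\to\R$ immediately gives $\rho_{\varphi,\psi}(n)\leqslant C\lVert\psi\rVert_\infty r(n)$ for all $\psi\in L^\infty(\cF_0)$, which is exactly the conclusion of the corollary in both cases.

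The main obstacle is the mixing assumption $\gcd\{R_i\}=1$ in Theorem~\ref{thm:bDelta}, which the corollary does not assume. I would handle it by the standard reduction. If the gcd is $d>1$, the tower splits into $d$ cyclically permuted pieces $\Delta_0,\dots,\Delta_{d-1}$, each of which is a union of stable-leaf sets and hence $\cF_0$-measurable, and $\fD^d$ restricted to each piece is mixing and modelled by a tower with tails of the same order. I would then decompose $\varphi$ and $\psi$ into their restrictions to the pieces and write $n=dq+s$. For the constant $C$ to be meaningful, "decay of correlations" has to be understood modulo this periodic part. If that is not intended, the corollary should be read with the gcd $=1$ assumption implicit (equivalently, ergodic and mixing), as is standard in the rest of this section. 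I would first check whether the Appendix already assumes aperiodicity, and in that case simply remark that the hypothesis is in force.
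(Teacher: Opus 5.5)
Your proposal is correct and follows the paper's proof, which is the one-line combination of Theorem~\ref{thm:bDelta} with Lemma~\ref{lem:DMN}. The details you supply are the right ones to fill in: the tails on $\bDelta$ equal those on $\Delta$ because $\bpi$ is measure-preserving and $R$ is constant on stable leaves, $\lVert\bphi\rVert_\infty$ should read $\lVert\bpsi\rVert_\infty$, and the constant can be absorbed into $r(n)$.

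On aperiodicity, only your second option is viable, because the statement is false when $s=\gcd\{R_i\}>1$. For example, $\varphi=1_{\{s\mid\ell\}}-1/s$ and $\psi=1_{\{s\mid\ell\}}$ give $\rho_{\varphi,\psi}(n)\geqslant s^{-2}$ for all $n$, so no "standard reduction" can rescue the literal bound. The paper accordingly invokes the corollary only in the mixing case. It handles $\gcd\{R_i\}>1$ afterwards, at the level of MLD, in the proof of Theorem~\ref{thm:Y99}.
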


\begin{proof} This is immediate from Theorem~\ref{thm:bDelta} and Lemma~\ref{lem:DMN}.
\end{proof}

\bibliographystyle{acm}

\begin{proof}[Proof of Theorem~\ref{thm:Y99}]
Since $\pi_\Delta\colon\Delta\to M$ is a measure-preserving semiconjugacy and H\"older observables $\varphi\colon M\to\R$ lift to dynamically H\"older observables $\varphi\circ\pi_\Delta\colon\Delta\to\R$, it suffices to prove MLD for dynamically H\"older observables $\varphi\colon\Delta\to\R$.

We begin by proving the result under the additional assumption that $\gcd\{R_i\}=1$ (so $\mu_\Delta$ and $\bmu_\Delta$ are mixing).
First, consider part (b) so $\mu_\Delta(R>n)\lesssim n^{-(\beta+1)}$.
Let $r(n)=n^{-\beta}$.
By Corollary~\ref{cor:Delta}(b), condition~(i) of Theorem~\ref{thm:mld} is satisfied.

Define $h_n(x)=\#\{0\leqslant j\leqslant n: \fD^jx\in Y\}$ for $x\in \Delta$.
Then $\diam \fD^n W^s(x)\lesssim \gamma^{h_n(x)}$ and hence
$\diam \varphi(\fD^n W^s(x))\lesssim \gamma^{h_n(x)}$.
By~\cite[Lemma~3.2]{DMN20}, condition~(ii) of Theorem~\ref{thm:mld} is satisfied.
Hence the result follows from Theorem~\ref{thm:mld}.

Next, consider part (a) so $\mu_\Delta(R>n)\lesssim e^{-\tau n^{\omega}}$.
By Corollary~\ref{cor:Delta}, condition~(i) of Theorem~\ref{thm:mld} is satisfied with $r(n)=e^{-\tau' n^{\omega}}$.

As in the proof of~\cite[Lemma 3.2]{DMN20},
\[
\sum_{W\in f^n\cW^s}\mu(W)\diam\varphi(W)
\lesssim \sum_{k=1}^{n+1}k\gamma^k
\Big(n\mu(R\geqslant n/k)+\int_Y 1_{\{R>n\}}R\,d\mu\Big).
\]
Now, $\int_Y 1_{\{R>n\}}R\,d\mu=\sum_{j>n} j\mu(R=j)\lesssim e^{-\tau' n^\omega}$ for all $\tau'\in(0,\tau)$.
Hence
\[
\sum_{k=1}^{n+1}k\gamma^k
\int_Y 1_{\{R>n\}}R\,d\mu \lesssim e^{-\tau' n^\omega}.
\]
Next, choose $\omega'\in(0,\omega)$ and write 
\begin{align*}
\sum_{k=1}^{n+1}k\gamma^k \mu(R\geqslant n/k) & =
\sum_{1\leqslant k\leqslant n^{\omega'}}k\gamma^k \mu(R\geqslant n/k)
+\sum_{n^{\omega'} <k\leqslant n}k\gamma^k \mu(R\geqslant n/k)
\\ & \lesssim
\mu(R\geqslant n^{1-\omega'})
+\sum_{k>n^{\omega'}}k\gamma^k
 \lesssim
n e^{-\tau n^{(1-\omega')\omega}} +e^{-\tau' n^{\omega'}}
\end{align*}
for some $\tau'$.
Taking $\omega'=\omega/(1+\omega)$, we obtain
\[
\sum_{W\in f^n\cW^s}\mu(W)\diam\varphi(W) \lesssim e^{-\tau' n^{\omega/(1+\omega)}},
\]
so condition~(ii) of 
Theorem~\ref{thm:mld} is satisfied with $r(n)=e^{-\tau' n^{\omega/(1+\omega)}}$.
Again, the result follows from Theorem~\ref{thm:mld}.

It remains to relax the assumption that $\gcd\{R_i\}=1$. 
Suppose that 
$\gcd\{R_i\}=s\geqslant 2$. 
We decompose the tower into $\Delta=\bigcup_{r=1}^s\Delta^{(r)}$ 
where each $\Delta^{(r)}$ is an $f_\Delta^s$-invariant tower over $Y$ with return time $R^{(r)}=\frac1s R$.
Note that $f_\Delta$ cyclicly permutes $\Delta^{(1)},\dots,\Delta^{(s)}$.
Moreover, we obtain mixing $f_\Delta^s$-invariant probability measures 
$\mu_\Delta^{(r)}=s\mu_\Delta|\Delta^{(r)}$ on $\Delta^{(r)}$.

Let $\varphi_k=\sum_{j=0}^{k-1}\varphi\circ f_\Delta^j$. 
For $E\subset\Delta$ measurable, 
$\mu_\Delta(E)=\frac1s\sum_{r=1}^s\mu_\Delta^{(r)}(E\cap\Delta^{(r)})$.
Hence,
\begin{equation} \label{eq:E}
\mld(\varphi,\varepsilon,n) = \frac1s\sum_{r=1}^s \mu_\Delta^{(r)}
\Big\{x\in\Delta^{(r)}\sup_{k\geqslant n}\Big|\frac1k\varphi_k(x) -\int_\Delta\varphi\,d\mu_\Delta\Big|>\varepsilon\Big\}.
\end{equation}

Next, we note that
$\varphi_k=\sum_{j=0}^{[k/s]}\varphi_s\circ f_\Delta^{sj}+O(1)$
and
$\int_{\Delta^{(r)}}\varphi_s\,d\mu_\Delta^{(r)}
=s\int_\Delta \varphi\,d\mu_\Delta$ for $r=1,\dots,s$.
It follows that
on $\Delta^{(r)}$,
\[
\frac1k\varphi_k -\int_\Delta\varphi\,d\mu_\Delta
=\frac1s\Big\{\frac{1}{[k/s]}\sum_{j=0}^{[k/s]-1}\varphi_s\circ f_\Delta^{sj}
-\int_{\Delta^{(r)}}\varphi_s\,d\mu_\Delta^{(r)}\Big\}+ O\Big(\frac1k\Big),
\]
and hence
\begin{equation} \label{eq:gcd}
\sup_{k\geqslant n}\Big|\frac1k\varphi_k -\int_\Delta\varphi\,d\mu_\Delta\Big|
= \frac1s\sup_{k\geqslant [n/s]}\Big|\frac{1}{k}\sum_{j=0}^{k-1}\varphi_s\circ f_\Delta^{sj}
-\int_{\Delta^{(r)}}\varphi_s\,d\mu_\Delta^{(r)}\Big|+ O\Big(\frac1n\Big).
\end{equation}

By~\eqref{eq:E} and~\eqref{eq:gcd}, MLD for $\varphi$ on $(f_\Delta,\Delta,\mu_\Delta)$ reduces to MLD for $\varphi_s$ on $(f_\Delta^s,\Delta^{(r)},\mu_\Delta^{(r)})$.
Note that $\varphi_s$ is dynamically H\"older whenever $\varphi$ is dynamically H\"older. Since $(f_\Delta^s,\Delta^{(r)},\mu_\Delta^{(r)})$ is mixing, we have reduced to the mixing case $\gcd\{R_i\}=1$.
\end{proof}

\section{Examples: slowly mixing billiards} \label{sec:ex}

In this section, we use Theorem~\ref{thm:Y99} to prove MLD for various slowly mixing billiard examples.
For background material on billiards, we refer to~\cite{ChernovMarkarian}.
The billiard domain, denoted by $Q$, is a compact connected subset of $\R^2$ or $\T^2$ with piecewise smooth boundary and the billiard flow $f_t$ is defined on $Q\times S^1$. Fix a point $q\in Q$ and a unit vector $\psi\in S^1$.
Then $q$ moves in straight lines with unit speed in direction $\psi$ until reflecting (angle of reflection equalling the angle of incidence) off the boundary $\partial Q$. This defines a volume-preserving flow.
A natural Poincar\'e section is given by $M=\partial Q\times[-\pi/2,\pi/2]$ corresponding to collisions with $\partial Q$ (with outgoing velocities in $[-\pi/2,\pi/2]$).  The Poincar\'e map $f\colon M\to M$ is called the {\em collision map} or the {\em billiard map}.  It preserves a probability measure $\mu$, equivalent to Lebesgue, called Liouville measure.

\begin{example}[Bunimovich stadia~\cite{Bunimovich79}]
These are convex billiard domains $Q\subset\R^2$ where $\partial Q$
is a simple closed curve consisting of
two semicircles $C_1,\,C_2$ of radius $1$ and
two parallel line segments $S_1,\,S_2$ of length $L$ tangent to the semicircles.

By~\cite{Markarian04,CZ08},
the billiard map $f\colon M\to M$ is modelled by a Young tower with polynomial tails $n^{-2}$.
By Theorem~\ref{thm:Y99}, we obtain $\mld(\varphi, \varepsilon, n)\lesssim n^{-1}$ for dynamically H\"older observables $\varphi$.
\end{example}

\begin{example}[Semi-dispersing billiards]
These are billiard domains 
$Q= R\setminus\Omega$ where $R\subset\R^2$ is a rectangle and $\Omega\subset \Int R$ is a disjoint union of strictly convex regions with $C^3$ boundaries and nonvanishing curvature. 

By~\cite{CZ05a,CZ08},
the billiard map $f\colon M\to M$ is modelled by a Young tower with polynomial tails $n^{-2}$.
By Theorem~\ref{thm:Y99}, we obtain $\mld(\varphi, \varepsilon, n)\lesssim n^{-1}$ for dynamically H\"older observables $\varphi$.
\end{example}

\begin{example}[Billiards with cusps and flat cusps]
These are billiard domains $Q\subset\R^2$ where $\partial Q$ is a simple closed curve consisting of finitely many convex inwards $C^3$ curves with nonvanishing curvature such that the interior angles at corner points are zero.
By~\cite{ChernovMarkarian07,CZ08}, the billiard map $f\colon M\to M$ is modelled by a Young tower with polynomial tails $n^{-2}$.
By Theorem~\ref{thm:Y99}, we obtain $\mld(\varphi, \varepsilon, n)\lesssim n^{-1}$ for dynamically H\"older observables $\varphi$.

In~\cite{Z17}, the nonvanishing curvature assumption was relaxed and it was shown that billiards with flat cusps are modelled by Young towers with polynomial tails $n^{-(\beta+1)}$ where $\beta\in(0,1)$. 
By Theorem~\ref{thm:Y99}, we obtain $\mld(\varphi, \varepsilon, n)\lesssim n^{-\beta}$ for dynamically H\"older observables $\varphi$.
\end{example}

\begin{example}[Bunimovich flowers]
This is a class of billiards where $\partial Q$ is a union of smooth curves that are either convex inwards (with bounded nonvanishing curvature) or arcs contained in semicircles, satisfying some additional technical assumptions, see~\cite{Bunimovich73,CZ05a}.

By~\cite{CZ05a,KB25},
the billiard map $f\colon M\to M$ is modelled by a Young tower with polynomial tails $n^{-3}$.
By Theorem~\ref{thm:Y99}, we obtain $\mld(\varphi, \varepsilon, n)\lesssim n^{-2}$ for dynamically H\"older observables $\varphi$.
\end{example}

\begin{example}[Dispersing billiards with flat points]
Classical dispersing billiards have $Q=\T^2\setminus\Omega$ where $\Omega$ is a disjoint union of strictly convex regions with $C^3$ boundaries and nonvanishing curvature. By~\cite{Y98,Chernov99}, the billiard maps have exponential decay of correlations. 

The nonvanishing curvature condition is relaxed in~\cite{CZ05}: they allowed two flat points with a periodic orbit running between the flat points. The boundary near the flat points in~\cite{CZ05} has profile $\pm(1+|x|^b)$ for some $b>2$.
By~\cite{CZ05,KB25}, these billiard maps are modelled by Young towers with polynomial tails $n^{-(\beta+1)}$ where $\beta=(b+2)/(b-2)\in(1,\infty)$.
By Theorem~\ref{thm:Y99}, we obtain $\mld(\varphi, \varepsilon, n)\lesssim n^{-\beta}$ for dynamically H\"older observables $\varphi$.
\end{example}

\appendix
\section{Background material on Young towers}
\label{sec:towers}

\subsection{Young structures}
\label{sec:Ystr}

Consider $M$ to be a finite-dimensional compact Riemannian manifold, and $f \colon M \to M$ a piecewise $C^1$ diffeomorphism, possibly with critical, singular, or discontinuity points. Let $d$ denote the distance on $M$, and let $\leb$ represent the Lebesgue measure on the Borel subsets of $M$, both induced by the Riemannian metric. Given a submanifold $\gamma \subset M$, we denote by $\leb_\gamma$ the Lebesgue measure on $\gamma$, induced by the restriction of the Riemannian metric to $\gamma$. 

We say that $\Gamma$ is a \emph{continuous family} of $C^1$ disks in $M$ with $\dim\Gamma=k$ if there are a compact metric space $K$, a unit disk $D\subset\R^k$ for some $1\leqslant k \leqslant \dim M-1$ and an injective continuous function $\Phi\colon K\times D\to M$ such that
\begin{itemize}
	\item $\Gamma=\left\{\Phi\left(\{x\}\times D\right)\colon x\in K\right\}$;
	\item$\Phi$ maps $K\times D$ homeomorphically onto its image;
	\item $x\mapsto\Phi|_{\{x\}\times D}$ defines a continuous map from $K$ into the space of $C^1$ embeddings of $D$ into $M$. 
\end{itemize}

We say that a compact set $Y \subset M$ has a \emph{product structure} if there exist continuous families of $C^1$ disks $\Gamma^s$ (stable disks) and $\Gamma^u$ (unstable disks), such that
\begin{itemize}
	\item $Y = \left(\bigcup_{\gamma \in \Gamma^s} \gamma \right) \cap \left(\bigcup_{\gamma \in \Gamma^u} \gamma \right)$;
	\item $\dim \Gamma^s + \dim \Gamma^u = \dim M$;
	\item each $\gamma \in \Gamma^s$ meets each $\gamma \in \Gamma^u$ in exactly one point.
\end{itemize}
Let $\gamma^*(y)$ denote the disk in $\Gamma^*$ containing $y \in Y$, for $* = s,u$. 

We say that $Y_0 \subset Y$ is an \emph{$s$-subset} if $Y_0$ has a product structure with respect to families $\Gamma_0^s$ and $\Gamma_0^u$ such that $\Gamma_0^s \subset \Gamma^s$ and $\Gamma_0^u = \Gamma^u$; \emph{$u$-subsets} are defined similarly.

The compact set $Y$ has a \emph{Young structure} or \emph{two-sided Gibbs-Markov structure} if $Y$ has a product structure such that
\[
\leb_\gamma \big( Y \cap \gamma \big) > 0, \quad \text{for all } \gamma \in \Gamma^u,
\]
and conditions \ref{Y1}--\ref{Y5} below are satisfied.

\begin{enumerate}[label=\textbf{(Y$_{\arabic*}$)}]
	\item \label{Y1} \textbf{Markov:} there are pairwise disjoint $s$-subsets $Y_1,\, Y_2, \cdots\subset Y$ such that
	\begin{itemize}
		\item $\leb_\gamma\left(\left(Y \setminus \cup_iY_i\right)\cap\gamma\right)=0$, for all $\gamma\in\Gamma^u$;
		\item for each $i\geqslant1$, there is $R_i\in\N$ such that $f^{R_i}\left(Y_i\right)$ is a $u$-subset and, moreover, for all $y\in Y_i$,
		\begin{equation*}
			f^{R_i}\left(\gamma^s(y)\right)\subset \gamma^s\left(f^{R_i}y\right) \text{ and } 	f^{R_i}\left(\gamma^u(y)\right)\supset \gamma^u\left(f^{R_i}y\right).
		\end{equation*}
	\end{itemize}
\end{enumerate}

The \emph{recurrence time} $R \colon Y \to \N$ and \emph{return map} $F=f^R \colon Y \to Y$ are defined to be
\begin{equation*}
	R|_{Y_i} = R_i \quad \text{and} \quad F|_{Y_i} = f^{R_i}|_{Y_i}, \quad
i \geqslant 1.
\end{equation*}
We remark that $R$ and $F$ are defined on a full $\leb_\gamma$-measure subset of $Y \cap \gamma$, for each $\gamma \in \Gamma^u$. Thus, there exists a set $Y' \subset Y$ intersecting each $\gamma \in \Gamma^u$ in a full $\leb_\gamma$-measure subset, such that $F^ny$ belongs to some $Y_i$, for all $n \geqslant 0$ and $y \in Y'$. 
For points $y, y' \in Y'$, we define the \emph{separation time}
\begin{equation}\label{eq.separa}
	s(y,y') = \min \left\{ n \geqslant 0 \colon F^ny \text{ and } F^ny' \text{ lie in distinct } Y_i \text{'s} \right\},
\end{equation}
with the convention that $\min\emptyset = \infty$. For definiteness, we set the separation time equal to zero for all other points. 

For the remaining conditions, we consider constants $C \geqslant1$ and $0 < \beta < 1$ depending only on $f$ and $Y$.

\begin{enumerate}[label=\textbf{(Y$_{\arabic*}$)}]
	\setcounter{enumi}{1}
	\item \label{Y2} \textbf{Contraction on stable disks:} for all $i\geqslant1$, $\gamma\in\Gamma^s$ and $y,y'\in\gamma\cap Y_i$,
	\begin{itemize}
		\item $d\left(F^ny,F^ny'\right)\leqslant C\beta^n$, for all $n\geqslant0$;
		\item$d\left(f^jy,f^jy'\right)\leqslant Cd(y,y')$, for all $1\leqslant j \leqslant R_i$.	
	\end{itemize}
	
	\item \label{Y3} \textbf{Expansion on unstable disks:} for all $i\geqslant1$, $\gamma\in\Gamma^u$ and $y,y'\in\gamma\cap Y_i$,
	\begin{itemize}
		\item $d\left(F^ny,F^ny'\right)\leqslant C\beta^{s(y,y')-n}$, for all $n\geqslant0$;
		\item$d\left(f^jy,f^jy'\right)\leqslant Cd\left(Fy,Fy'\right)$, for all $1\leqslant j \leqslant R_i$.	
	\end{itemize}

	\item \label{Y4}\textbf{ Bounded distortion:} for all $i\geqslant 1$, $\gamma\in\Gamma^u$ and $y,y'\in\gamma\cap Y_i$,
	\begin{equation*}
		\log\frac{\det DF|_{T_y\gamma}}{\det DF|_{T_{y'}\gamma}}\leqslant C\beta^{s\left(Fy,Fy'\right)}.
	\end{equation*}
\end{enumerate}
	
Let $\gamma,\,\gamma'\in\Gamma^u$.  The
\emph{stable holonomy map} $\Theta_{\gamma,\gamma'} \colon \gamma \cap Y \to \gamma' \cap Y$ is given by
\[
\Theta_{\gamma,\gamma'}(y) = \gamma^s(y) \cap \gamma',\quad
y \in \gamma \cap Y.
\]
\begin{enumerate}[label=\textbf{(Y$_{\arabic*}$)}]
	\setcounter{enumi}{4}
	\item \label{Y5}
\textbf{ Regularity of the stable holonomy:} for all $\gamma,\gamma'\in\Gamma^u$, the measure $\left(\Theta_{\gamma,\gamma'}\right)_* \leb_\gamma$ is absolutely continuous with respect to $\leb_{\gamma'}$ and its density $\rho_{\gamma,\gamma'}$ satisfies\begin{equation*}
		C^{-1}\leqslant\int_{\gamma'\cap Y}\rho_{\gamma,\gamma'} d\leb_{\gamma'}\leqslant C \quad\text{and}\quad\log\frac{\rho_{\gamma,\gamma'}(x)}{\rho_{\gamma,\gamma'}(y)}\leqslant C\beta^{s(y,y')},
	\end{equation*}
	for all $y,y'\in\gamma'\cap Y$.
\end{enumerate}
Finally, we assume that $\int_{Y\cap \gamma}R\,dm_\gamma<\infty$ for some, and hence all, $\gamma\in\Gamma^u$.

By Young~\cite[Proof of Theorem~1]{Y98}, there is an $F$-invariant Borel probability measure $\mu_Y$ on $Y$ with 
absolutely continuous conditional measures on unstable disks $\gamma^u$.

\subsection{Two-sided Young towers}
\label{sec:twosided}

We define the \emph{two-sided Young tower}
$\Delta=\{(y,\ell):y\in Y,\,0\le \ell<R(y)\}$ and tower map
\[
\fD\colon \Delta\to \Delta\,, \qquad
\fD(y,\ell)=\begin{cases} (y,\ell+1) , &0\le \ell<R(y)-1 \\
(Fy,0) , &\ell=R(y)-1 
\end{cases}\,,
\]
with ergodic invariant probability measure $\mu_\Delta=(\mu_Y\times{\rm counting})/\int_Y R\,d\mu_Y$.
Define the semiconjugacy
\[
\pi_\Delta:\Delta\to M, \qquad \pi_\Delta(y,\ell)=f^\ell y.
\]
Then $\mu=\pi_\Delta^*\mu_\Delta$ is an $f$-invariant probability measure on $M$.
By Young~\cite[Theorem~1]{Y98}, 
$\mu$ is an SRB measure: $f$ has no zero Lyapunov exponents $\mu$-a.e.\ and $\mu$ induces absolutely continuous conditional measures on unstable disks $\gamma^u$.
In addition, $\mu$ is mixing if and only if $\gcd \{R_i\}=1$.

\subsection{Quotient towers}

We define an equivalence relation $\sim$ on $Y$ and on $\Delta$ where
$(y,\ell)\sim(y',\ell)$ if $y'\in \gamma^s(y)$.
Note that $R:Y\to\Z^+$ is constant on equivalence classes.
Let $\bY=Y/\sim$.
Define the \emph{quotient Young tower}
$\bDelta=\{(y,\ell):y\in \bY,\,0\le \ell<R(y)\}=\Delta/\sim$ and tower map
\[
\bfD\colon \bDelta\to \bDelta\,, \qquad
\bfD(y,\ell)=\begin{cases} (y,\ell+1) , &0\le \ell<R(y)-1 \\
(\bar Fy,0) , &\ell=R(y)-1 
\end{cases}\,,
\]
with ergodic invariant probability measure $\bmu_\Delta=(\mu_\bY\times{\rm counting})/\int_\bY R\,d\bmu_Y$.
The natural projection $\bpi\colon \Delta\to\bDelta$ defines a measure-preserving semiconjugacy.
Again, $\bmu$ is mixing if and only if $\gcd \{R_i\}=1$.

\subsection{Observables}
The separation time $s$ on $\Delta$ is constant on equivalence classes and hence defines a separation time $s$ on $\bDelta$.
Then $d_\theta(y,y')=\theta^{s(y,y')}$ is a metric on $\bDelta$ for $\theta\in(0,1)$.
Given an observable $\bphi:\bDelta\to\R$, define
\[
\lVert\bphi\rVert_\theta=
\lVert\bphi\rVert_\infty+\sup_{\stackrel{(y,\ell),\,(y',\ell)\in\bDelta}{y\neq y'}}
\frac{|\bphi(y,\ell)-\bphi(y',\ell)|}{d_\theta(y,y')}.
\]
We say that $\bphi$ is \emph{$d_\theta$-Lipschitz} if $\lVert\bphi\rVert_\theta<\infty$.

Returning to the two-sided tower $\Delta$, we say that
an observable $\varphi:\Delta\to\R$ is \emph{dynamically H\"older} if it is bounded and there exist constants $C>0$, $\theta\in(0,1)$ such that
\begin{equation} \label{eq:dyn}
|\varphi(y,\ell)-\varphi(y',\ell)|\le C\big(d_Y(y,y')+d_\theta(\bar\pi y,\bar\pi y')\big)
\end{equation}
for all $(y,\ell),\,(y',\ell)\in \Delta$.

For the underlying system, an observable $\varphi\colon M\to\R$ is \emph{dynamically H\"older} if the lifted observable $\varphi\circ \pi_\Delta\colon \Delta\to\R$ is dynamically H\"older. By definition of Young structure, H\"older observables on $M$ are automatically dynamically H\"older (with $\theta=\beta^\eta$ where $\eta$ is the H\"older exponent).

\end{document}